\numberwithin{equation}{section}
\newtheorem{theorem}{Theorem}[section]
\newtheorem{lemma}[theorem]{Lemma}
\newtheorem{definition}[theorem]{Definition}
\newcommand{\bbE}{{\ensuremath{\mathbb E}} }
\newcommand{\bbN}{{\ensuremath{\mathbb N}} }
\newcommand{\bbP}{{\ensuremath{\mathbb P}} }
\newcommand{\bbR}{{\ensuremath{\mathbb R}} }
\newcommand{\bbZ}{{\ensuremath{\mathbb Z}} }
\newcommand{\cA}{{\ensuremath{\mathcal A}} }
\newcommand{\cB}{{\ensuremath{\mathcal B}} }
\newcommand{\cC}{{\ensuremath{\mathcal C}} }
\newcommand{\cE}{{\ensuremath{\mathcal E}} }
\newcommand{\cF}{{\ensuremath{\mathcal F}} }
\newcommand{\cG}{{\ensuremath{\mathcal G}} }
\newcommand{\cH}{{\ensuremath{\mathcal H}} }
\newcommand{\cI}{{\ensuremath{\mathcal I}} }
\newcommand{\cN}{{\ensuremath{\mathcal N}} }
\newcommand{\cO}{{\ensuremath{\mathcal O}} }
\newcommand{\cV}{{\ensuremath{\mathcal V}} }
\newcommand{\ga}{\alpha}
\newcommand{\gep}{\varepsilon}       
\newcommand{\gz}{\zeta}
\newcommand{\gl}{\lambda}
\newcommand{\gL}{\Lambda}
\newcommand{\fC}{{\ensuremath{\mathfrak C}} }
\renewcommand{\tilde}{\widetilde}          
\DeclareMathSymbol{\leqslant}{\mathalpha}{AMSa}{"36} 
\DeclareMathSymbol{\geqslant}{\mathalpha}{AMSa}{"3E} 
\DeclareMathSymbol{\eset}{\mathalpha}{AMSb}{"3F}     
\newcommand{\dd}{\text{\rm d}}             
\newcommand{\inftwo}[2]{\inf_{\substack{#1 \\ #2}}} 
\newcommand{\sumtwo}[2]{\sum_{\substack{#1 \\ #2}}} 
\newcommand{\one}{{\mathchoice {1\mskip-4mu\mathrm l}
         {1\mskip-4mu\mathrm l}
         {1\mskip-4.5mu\mathrm l}
         {1\mskip-5mu\mathrm l}}}
\newcommand{\R}{\mathbb{R}}
\newcommand{\Z}{\mathbb{Z}}
\newcommand{\N}{\mathbb{N}}
\def\bs{\boldsymbol}
\newcommand{\PEfont}{\mathrm}
\DeclareMathOperator{\cov}{\ensuremath{\PEfont Cov}}
\def\p{\ensuremath{\PEfont P}}
\def\e{\ensuremath{\PEfont E}}
\newcommand{\E}{\e}
\renewcommand{\P}{\p}
\newcommand\bC{\ensuremath{\bs{\mathrm{C}}}}
\renewcommand{\epsilon}{\varepsilon}
\renewcommand{\rho}{\varrho}
\renewcommand{\phi}{\varphi}
\newcommand{\sC}{{\ensuremath{\mathfrak C}} }
\newenvironment{myenumerate}{%
\renewcommand{\theenumi}{\arabic{enumi}}%
\renewcommand{\labelenumi}{{\rm(\theenumi)}}%
\begin{list}{\labelenumi}
	{%
	\setlength{\itemsep}{0.4em}%
	\setlength{\topsep}{0.5em}%
	\setlength\leftmargin{2.45em}%
	\setlength\labelwidth{2.05em}%
	\setlength{\labelsep}{0.4em}%
	\usecounter{enumi}%
	}%
	}%
{\end{list}
}
\newenvironment{myitemize}{%
\begin{list}{$\bullet$}%
 	{%
	\setlength{\itemsep}{0.4em}%
	\setlength{\topsep}{0.5em}%
	\setlength\leftmargin{2.45em}%
	\setlength\labelwidth{2.05em}%
	\setlength{\labelsep}{0.4em}%
	}%
	}%
{\end{list}}
\renewenvironment{itemize}{
\begin{myitemize}}%
{\end{myitemize}}
\def\dd{\mathrm{d}}
\newcommand{\dist}{\text{\rm dist}} 
\newcommand{\Leb}{\text{\rm Leb}}
\newcommand{\inte}{\text{\rm int}} 
\newcommand{\ext}{\text{\rm ext}}
\newcommand{\cp}{\mathrm{cap}}
\newcommand{\vol}{\mathrm{vol}}
\newcommand{\ws}[2]{W^{ #1}_{ #2}}
\definecolor{light-gray}{gray}{0.5}
\newcommand{\be}{\begin{equation}}
\newcommand{\ee}{\end{equation}}
\newcommand{\ba}{\begin{aligned}}
\newcommand{\ea}{\end{aligned}}
\newcommand{\ind}[1]{\one_{\{ #1\}}}
\newcommand{\good}{\text{good}}
\newcommand{\esssup}{\mathrm{esssup}}
\begin{document}


\title{Asymptotics of the critical time in Wiener sausage percolation with a small radius}

\author{
Dirk Erhard
\footnotemark[1]
\\
Julien Poisat
\footnotemark[3]
}

\footnotetext[1]{
Mathematics Institute, University of Warwick,
Coventry, CV4 7AL, UK,\\
{\sl D.Erhard@warwick.ac.uk}
}

\footnotetext[3]{
CEREMADE, Université Paris-Dauphine, PSL Research University, UMR 7534
Place du Maréchal de Lattre de Tassigny,
75775 PARIS CEDEX 16 - FRANCE\\
{\sl poisat@ceremade.dauphine.fr}
}

\date{\today}
\maketitle

\begin{abstract}
We consider a continuum percolation model on $\R^d$, where $d\geq 4$.
The occupied set is given by 
the union of independent Wiener sausages with radius $r$ running up to time $t$ and whose
initial points are distributed according to a homogeneous Poisson point process.
It was established in a previous work by Erhard, Mart\'{i}nez and Poisat~\cite{EMP13} that (1) if $r$ is small enough there is a non-trivial percolation transition
in $t$ occurring at a critical time $t_c(r)$ and (2) in the supercritical regime the unbounded cluster is unique. In this paper we investigate the asymptotic behaviour of the critical time when the radius $r$ converges to $0$. The latter does not seem to be deducible from simple scaling arguments. We prove that for $d\geq 4$, there is a positive constant $c$ such that
$c^{-1}\sqrt{\log(1/r)}\leq t_c(r)\leq c\sqrt{\log(1/r)}$ when $d=4$ and $c^{-1}r^{(4-d)/2}\leq t_c(r) \leq c\ r^{(4-d)/2}$ when $d\geq 5$, as $r$ converges to $0$. We derive along the way moment and large deviation estimates on the capacity of Wiener sausages, which may be of independent interest.

\medskip\noindent

{\it MSC 2010.} Primary 60K35, 60J45, 60J65, 60G55, 31C15; Secondary 82B26.\\
{\it Key words and phrases.} Continuum percolation, Brownian motion,
Poisson point process, phase transition, Boolean percolation, Wiener sausage, capacity.\\
{\it Acknowledgments.} The authors are grateful to A.\@ Sapozhnikov and G.\@ F.\@ Lawler for helpful suggestions. DE and JP were supported by ERC Advanced Grant 267356 VARIS. DE was supported by the ERC Consolidator Grant of Martin Hairer. DE and JP acknowledge the hospitality of Université Paris-Dauphine and of the University of Warwick, where part of this work was done. 
\end{abstract}

\newpage




\section{Introduction}
\label{S1}

{\it Notation.} For every $d\geq1$, we denote by $\Leb_d$ the Lebesgue measure on $\R^d$. The symbol
$||\cdot||$ stands for the Euclidean norm on $\R^d$ and the symbols $|\cdot|_1$ and $|\cdot|_\infty$ stand for the $\ell_1$ and $\ell_\infty$ norms on $\bbZ^d$, respectively.
The open ball with center $z$, radius $r$ and with respect to the Euclidean norm is denoted by $\cB(z,r)$, the closed ball by $\overline{\cB}(z,r)$, and $c_{\mathrm{vol}} = \Leb_d(\cB(0,1))$.
For $A\subseteq \R^d$ and $x\in\R^d$, we denote by $d(x,A)$ the Euclidean distance between $x$ and $A$, i.e.\ $d(x,a) = \inf_{y\in A}\{||x-y||\}$. The complement of a set $A$ is denoted by $A^c$ and its closure by $\bar A$ (the topology will depend on the context).
For two sets $A_1, A_2 \subseteq \R^d$, we denote by $A_1 \oplus A_2$ their Minkowski sum, defined by $\{x_1 + x_2,\ x_1\in A_1,\ x_2\in A_2\}$. 
For $a\in\R$, we denote by $\lceil a \rceil$ its upper integer part. The symbol $|\cdot|$ stands  for the cardinality of a set or the absolute value of a real number, depending on the context.
We denote by $\one$ the infinite column vector with all entries equal to one.
We denote by $G:\R^d\times\R^d\to [0,\infty)$ the Green function of the standard Brownian motion. Given $f$ and $g$ two positive functions we write $f\lesssim g$ if there is a constant $c\in (0,\infty)$ so that $f\leq c g$.\\

Throughout the paper the letter $c$ is used to denote a constant whose precise value is irrelevant (possibly depending on the dimension) and which may change from line to line.\\

\subsection{Introduction to the model}
\label{S1.1}
\par Let $\cE$ be a Poisson point process with intensity $\lambda\ \Leb_d$, where $\gl > 0$. Conditionally on $\cE$, we define a collection of independent Brownian motions $\{(B_t^x)_{t\geq 0},\, x\in \cE\}$ such that
for each $x\in\cE$, $B_0^x = x$ and $(B_t^x-x)_{t\geq0}$ is independent of $\cE$. We refer the reader to Section 1.4 in \cite{EMP13} for a rigorous construction. Let $\P$ and $\E$ be the probability measure and expectation of Brownian motion, respectively. We denote by $W_{[0,t]}^{x,r} = \bigcup_{0\leq s\leq t}\cB(B_s^x,r) = B^x_{[0,t]} \oplus \cB(0,r)$ the Wiener sausage with radius $r$, started at $x$ and running up to time $t$. When it is more convenient, we shall use $\P_x$ for a Brownian motion started at $x$, and we remove the superscript $x$ from $B$ or $W$. Also, we will use the symbol $\tilde\P$ to refer to an independent copy of a Brownian motion. If $A$ is an event, then $\E(\,\cdot\, ; A)$ stands for $\E(\, \cdot\, \one_A)$. Finally, we use the letter $\bbP$ for the law of the whole process that is formed by the Poisson points and the Brownian motions. \\

\par The object of interest is the {\it occupied} set defined by
\begin{equation}\label{def:occB}
\cO_{t,r} := \bigcup_{x\in\cE} W_{[0,t]}^{x,r},\qquad \cO_{t}:=\bigcup_{x\in \cE} B^x_{[0,t]}, \qquad t\geq 0,r> 0.
\end{equation}
The rigorous construction found in \cite{EMP13} yields ergodicity of $\cO_{t,r}$ with respect to shifts in space.
For $d\geq 4$, \u{C}ern\'{y}, Funken and Spodarev \cite{CFS08} used this model to describe the target detection area of a network of mobile sensors initially distributed at random and moving according to Brownian motions. In a similar spirit Kesidis, Konstantopoulos and Phoha \cite{KKP05} study the detection time of a particle that is placed at the origin. Note that at time $t=0$, the occupied set reduces to a collection of balls with randomly located centers: this goes under the name of Boolean percolation model and was first introduced by Gilbert \cite{G61} to study infinite communication networks. We refer to Meester and Roy \cite{MR96} for an introductory overview of this model.\\

\par Two points $x$ and $y$ of $\bbR^d$ are said to be {\it connected} in $\cO_{t,r}$ if and only if there exists a continuous function $\gamma: [0,1]\mapsto
\cO_{t,r}$ such that $\gamma(0)=x$ and $\gamma(1)=y$. A subset of $\cO_{t,r}$ is connected if and only if all of its points are pairwise connected, and a connected subset of $\cO_{t,r}$ is called a component. A component $\cC$ is bounded if there exists $R>0$ such that $\cC \subseteq \cB(0,R)$.
Otherwise, the component is said to be unbounded. A {\it cluster} is a connected component which is maximal for the inclusion. Denote by $\cC(x)$ the set of points in $\cE$ which are connected to $x$ through $\cO_{t,r}$.

\par A set is said to percolate if it contains an unbounded connected component. In \cite{EMP13} it was shown that $\cO_{t,r}$ undergoes a non-trivial percolation phase transition for all $d\geq 2$. More precisely it was shown that if $d\in\{2,3\}$, then for all $\lambda>0$ there exists $t_c(\gl)\in (0,\infty)$ such that for all $t<t_c(\gl)$ the set $\cO_{t}$ only contains bounded connected components, whereas for $t>t_c(\gl)$, the set $\cO_{t}$ percolates with a unique unbounded cluster. What happens at criticality is still unknown.
In essence the same result holds for $d\geq 4$. However, due to the fact that the paths of two independent Brownian motions do not intersect (except at a possibly common starting point), the set $\cO_{t,0}$ almost surely (a.s.) does not percolate for all $t\geq 0$. Therefore, the radius $r$ needs to be chosen positive. In this case, denote by $\lambda_c(r)$ the critical value such that the set $\cO_{0,r}$ a.s.\ percolates for all $\lambda > \lambda_c(r)$, and a.s. does not for $\lambda < \lambda_c(r)$, see Section 3.3 in Meester and Roy~\cite{MR96}. Theorem 1.3 in \cite{EMP13} states that when $r>0$ and $\lambda <\lambda_c(r)$, then there is a critical time $t_c(\lambda,r)\in (0,\infty)$ which separates a percolation regime ($t>t_c(\gl, r)$) from a non-percolation regime ($t<t_c(\gl,r)$). Equivalently, a phase transition occurs when $\lambda$ is fixed and the radius is chosen smaller than a critical radius $r_c(\gl)$. We choose the last formulation, which is more relevant for the rest of the paper.

\subsection{Main Result}
\label{S1.2}
In this paper we study the behaviour of the critical time as the radius converges to $0$ and the intensity is kept fixed to $\gl=1$. For this reason, we shall now write $t_c(r)$ instead of $t_c(1,r)$.
Let us mention that no simple scaling argument seems to immediately yield bounds on $t_c(r)$. Indeed, since for each $d$ there are three parameters ($\lambda$, $t$ and $r$), it is not possible to scale two parameters independently of the third one. We expect that $t_c(r)$ goes to $\infty$ as $r\to 0$, since $t_c(0)=\infty$. Note that this is not an immediate consequence of continuity since the event $\{\cO_t \mbox{ does not percolate}\}$ is not the increasing union of the events $\{\cO_{t,r} \mbox{ does not percolate}\}$ for $r>0$. The following theorem however confirms our intuition and determines at which speed the convergence takes place.

\begin{theorem}
\label{thm:5}
Let $d\geq 4$. There is a constant $c$ and an $r_0 \in (0,1)$ such that for all $r\leq r_0,$
\begin{equation}
\label{eq:thm}
\left\{
\begin{array}{ll}
c^{-1} \sqrt{\log(1/r)}\leq t_c(r)\leq c \sqrt{\log(1/r)}, &\mbox{if } d=4,\\
&\\
c^{-1} r^{(4-d)/2}\leq t_c(r) \leq c\ r^{(4-d)/2}, &\mbox{if } d\geq 5.
\end{array}\right.
\end{equation}
\end{theorem}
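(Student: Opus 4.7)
The target scale in \eqref{eq:thm} coincides with the threshold $N(r,t)\asymp 1$, where
\[
N(r,t) := \int_{\R^d} \P\bigl(W^{0,r}_{[0,t]} \cap W^{x,r}_{[0,t]} \neq \emptyset\bigr)\,\dd x
\]
is the Mecke mean number of Poisson sausages intersecting the one rooted at $0$. Conditioning on the origin trajectory and using the hitting estimate $\P_x(\tau_A\leq t)\asymp \cp(A)/t^{(d-2)/2}$ for $\|x\|\lesssim \sqrt t$ and $A\subset \cB(0,\sqrt t)$, integrating $x$ over the ball of radius $\sqrt t$ yields
\[
N(r,t) \asymp t\cdot \E[\cp(W^{0,2r}_{[0,t]})].
\]
Combined with the Wiener-sausage capacity asymptotics $\E[\cp(W^{0,r}_{[0,t]})]\asymp t/\log(t/r^2)$ in $d=4$ and $\E[\cp(W^{0,r}_{[0,t]})]\asymp r^{d-4}\,t$ in $d\geq 5$ (the latter because transience makes the capacities of the $\asymp t/r^2$ balls along the path approximately additive), solving $N(r,t)\asymp 1$ returns exactly $\sqrt{\log(1/r)}$ and $r^{(4-d)/2}$ respectively. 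I take this common threshold as the target scale $t^*(r)$.

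\emph{Lower bound.} For $t\leq c\,t^*(r)$ with $c$ small, I would show absence of percolation by bounding the expected cluster size. Iterating the Mecke--Slivnyak formula on the Palm version of $\cE$, the expected number of Poisson sausages reachable in $k$ exploration steps from a fictitious sausage at $0$ is dominated by $N(r,t)^k$; if $c$ is chosen so that $N(r,t)\leq 1/2$, then $\E[|\cC(0)|]<\infty$ and an infinite cluster cannot exist, by Borel--Cantelli/ergodicity. The only technical point is justifying the iteration via the spatial Markov property of $\cE$ and the independence of the Brownian motions attached to disjoint sausages.

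\emph{Upper bound.} For $t\geq C\,t^*(r)$ with $C$ large, I would use a coarse-graining scheme: tile $\R^d$ into boxes of side $L\asymp \sqrt t$, and declare a box \emph{good} if it contains a macroscopic cluster of sausages connecting each of its $2d$ faces to the good cluster of the adjacent box. Using $N(r,t)\gg 1$ together with a concentration bound of the form $\var(\cp(W^{0,r}_{[0,t]}))\ll \E[\cp(W^{0,r}_{[0,t]})]^2$, the probability of being good can be made arbitrarily close to $1$, and by the standard domination of finite-range dependent percolation by Bernoulli site percolation the good boxes dominate a supercritical Bernoulli site percolation on $\Z^d$, from which percolation of $\cO_{t,r}$ follows.

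\emph{Main obstacle.} The crux of the whole argument is the derivation of sharp moment and large-deviation estimates on $\cp(W^{0,r}_{[0,t]})$, uniform in $r\to 0$. The capacity is a non-local functional of the Brownian path. A natural plan is to split $[0,t]$ into $\lceil t/r^2\rceil$ time windows and exploit the sub- and super-additivity bounds
\[
\sum_i\cp(W_i) - \cst\sum_{i\neq j}\frac{\cp(W_i)\cp(W_j)}{\dist(W_i,W_j)^{d-2}} \leq \cp\Bigl(\bigcup_i W_i\Bigr) \leq \sum_i \cp(W_i),
\]
thereby reducing the concentration problem to that of a sum of weakly dependent contributions. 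Controlling the correction term requires delicate two-window moment estimates on the Brownian trajectory, and the borderline case $d=4$ with its logarithmic correction likely demands a separate, more delicate argument than $d\geq 5$.
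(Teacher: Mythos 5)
Your identification of the threshold scale via $N(r,t)\asymp t\,\E[\cp(W^{0,r}_{[0,t]})]\asymp 1$ and your overall two-sided strategy (branching/first-moment for the lower bound, coarse-graining for the upper bound, with capacity moment and deviation estimates as the key input) match the architecture of the paper. However, the lower bound as you state it has a genuine gap. The claim that ``the expected number of Poisson sausages reachable in $k$ exploration steps is dominated by $N(r,t)^k$'' does not follow from iterating Mecke--Slivnyak. The expected number of chains $0\sim x_1\sim\cdots\sim x_k$ involves, at each intermediate vertex $x_i$, the \emph{same} sausage $W^{x_i,r}_{[0,t]}$ in both events $\{x_{i-1}\sim x_i\}$ and $\{x_i\sim x_{i+1}\}$, and these events are positively correlated: conditioning a sausage to be hit biases its capacity upward, which in turn increases its expected number of further neighbours. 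So the factorization into a product of $k$ copies of $N(r,t)$ fails; the ``technical point'' is not the independence of the Brownian motions attached to disjoint points (which is automatic) but this size-biasing. The paper's entire lower-bound construction exists to handle it: the Poisson points are partitioned into classes $\sC_j$ according to the capacity of their sausage (and, in $d=4$, additionally their outradius), the exploration is dominated by a \emph{multitype} Poisson branching process with kernel $N(i,j)$, and extinction is deduced from convergence of $\sum_k N^k\one$, which in turn requires the large-deviation estimates $\P(\cp(W^{0,r}_{[0,t]})\geq jtr^{d-4})\leq e^{-cjt}$ (resp.\ $\leq t^{-cj}$ in $d=4$) to show that the high-capacity types contribute negligibly. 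Without some version of this uniform control over the conditioned offspring distribution, your $\E|\cC(0)|<\infty$ conclusion is unsupported.

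Two smaller remarks on the upper bound. First, the concentration statement $\var(\cp)\ll\E[\cp]^2$ that you invoke is stronger than what the paper proves or needs: the paper only establishes $\E[\cp^2]\lesssim\E[\cp]^2$ and applies Paley--Zygmund to get a \emph{positive} (not near-one) probability that a single sausage has capacity at least half its mean; the near-one probability of a good box then comes for free from the $\asymp t^{d/2}$ independent Poisson points available in each box. This route avoids the delicate control of the cross-terms in your sub/super-additivity decomposition. Second, the paper percolates inside a two-dimensional slab of boxes and closes the argument with a Peierls contour count rather than a Liggett--Schonmann--Stacey domination; your variant is plausible but you would still need the quantitative hitting estimate $\P(W^{x,r}_{[0,t]}\cap A\neq\emptyset)\gtrsim t^{1-d/2}\cp(A^r)$ for $A$ at distance $\asymp\sqrt t$, which is what converts the capacity lower bound into a connection probability and produces the large factor $c_*^2$ needed to beat the contour entropy.
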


\subsection{Discussion}
\label{S1.3}

Items (1)--(3) below contain comments about the result. Items (4)--(5) are general comments about the model.\\

\noindent \textrm{\bf (1)} For completeness, we state that $r\mapsto t_c(r)$ stays bounded as $r\to 0$ when $d\in\{2,3\}$, since, by monotonicity, $\limsup_{r\to 0} t_c(r) \leq t_c(0) <\infty$. This follows from \cite[Theorem 2]{EMP13}. Continuity at $r=0$ is not immediate, but we expect that this follows from a finite-box criterion of percolation.  Theorem \ref{thm:5} shows in particular that when $d\geq 4$ the critical time is continuous at $r=0$, since $t_c(0) = \infty$.
\medskip

\noindent \textrm{\bf (2)} 
One motivation to study the small radius asymptotics of the critical time is to gain a better understanding of the percolation mechanisms when $d\geq 4$. Indeed, when $d\in\{2,3\}$ percolation can occur because two independent Brownian motions that start close to each other eventually intersect, see \cite[Lemma 5.1]{EMP13}. This argument however breaks down when $d\geq 4$. The proof of Theorem \ref{thm:5} gives some insight on how percolation occurs in that case. 
\medskip

\noindent \textrm{\bf (3)}
The proof of our result makes use of moment and large deviation estimates on the capacity of a Wiener sausage, that we derive in Section \ref{S5}. When $d=4$, these are more subtle and therefore require a more careful analysis than in the high dimensional case $d\geq 5$. This is due to the logarithmic correction in the increase of the mutual intersection local time in four dimensions. For similar moment estimates in the case of simple random walk, we refer to Rath and Sapozhnikov \cite{RS12} ($d\geq 5$) and Chang and Sapozhnikov~\cite[Equation (4)]{CS14} ($d=4$). Let us mention that while preparing this manuscript we were getting aware of a work in progress by van den Berg, Bolthausen and den Hollander~\cite{vdBBdH} who developed simultaneously to us capacity estimates that are similar in spirit. 
\medskip

%

\noindent \textrm{\bf (4)} Random interlacement is a Poisson point process on infinite random walk paths obtained when looking at the trace of a simple random walk on the torus $(\bbZ/N\bbZ)^d$ started from the uniform distribution, running up to time $u N^d$ and letting $N\nearrow\infty$, see Sznitman~\cite{S10}. We expect that, as $t\nearrow\infty$ , $\gl\searrow 0$ and $\gl t$ stays constant, while $r$ is fixed, our model shares features with a continuous version of random interlacements, see Sznitman~\cite{S13}.  Indeed, in the regime described above, the number of Brownian trajectories entering a set $A$ is a Poisson random variable with intensity proportional to $\gl t\ \cp(A)$, which is a key feature of random interlacements. Moreover, the product of $\gl t$ serves as an intensity parameter.  This limiting regime exhibits long-range dependence, in the sense that if $A_1$ and $A_2$ are two bounded sets, then 
\begin{equation}
\cov(\one_{\{A_1\cap\cO_t \neq \emptyset\}},\one_{\{A_2\cap\cO_t \neq \emptyset\}}) \sim c\ \dist(A_1,A_2)^{2-d},
\end{equation}
as $\dist(A_1,A_2)\nearrow\infty$, $t\nearrow\infty$ and $\gl t$ stays constant. Indeed, the left-hand side becomes asymptotically equivalent to the difference between $\cp(A_1\cup A_2)$ and $\cp(A_1) + \cp(A_2)$, which has the desired order.

\medskip

\noindent \textrm{\bf (5)} Peres, Sinclair, Sousi and Stauffer \cite{PSSS13, PSS13} also study a system of points randomly distributed in space and moving according to Brownian motions. However, instead of only looking at $\cO_{t,r}$, they also look at $\Sigma_{t,r} = \cup_{x\in \cE}\cB(B_t^x,r)$ at \emph{each fixed time }t. Nevertheless, in contrast to our setting, they choose $r$ large enough such that $\Sigma_{t,r}$ contains an unbounded cluster for all $t\geq 0$. In these papers the focus is on three aspects:
\begin{itemize}
\item[(i)] detection (the first time that a target point is contained in $\Sigma_{t,r}$);
\item[(ii)] coverage (the first time that all points inside a finite box are contained in $\cO_{t,r}$);
\item[(iii)] detection by the unbounded cluster (the time it takes until a target point belongs to the unbounded cluster of $\Sigma_{t,r}$).
\end{itemize}  

\subsection{Open questions}
\label{S1.3b}

\noindent \textrm{\bf (1)} Do the upper and lower bounds in Theorem \ref{thm:5} match? More precisely, is there a $c_*\in (0,\infty)$ such that
\begin{equation}
\label{eq:f}
\lim_{r\to 0}t_c(r)/f(r) = c_*,\quad \mbox{with}\quad f(r)=\left\{
\begin{array}{ll}
r^{(4-d)/2}, &d\geq 5,\\
\sqrt{\log(1/r)}, &d=4?
\end{array}\right.
\end{equation}
\medskip

\noindent \textrm{\bf (2)} Is there a way to define a limiting random subset of $\bbR^d$ as we set time to $t(r)= c f(r)$ (see \eqref{eq:f}) and intensity to $\gl = 1$ in our model, and let $r\searrow 0$? Would this limiting object have a percolation phase transition in $c$ and if so, would the critical value of $c$ coincide with the constant $c_*$  in \eqref{eq:f}? Note that one should beforehand perform a change of parameters. Indeed, for any couple of points $x$ and $y$  in $\bbR^d$, the intersection of $\ws{x,r}{[0,\infty)}$ and $\ws{y,r}{[0,\infty)}$ becomes eventually empty as $d\geq4$ and $r\searrow 0$, meaning that percolation occurs out of arbitrarily large windows and is thus not visible in the limit. To fix this issue, one may set time $t=1$ such that intersections of Wiener sausages occur in a space window that remains bounded, and in order to be consistent with the previous scaling, let the intensity parameter be $\gl(t) = t^{d/2}$ and the radius parameter be $r(t) = c t^{\frac{d}{2(4-d)}}$ if $d\geq 5$ and $r(t) = t^{-1/2}e^{-t^2/c}$ if $d=4$, with a different $c$. 

\subsection{Outline}
\label{S1.4}

In Section 2, we recall facts about the Green function and the Newtonian capacity. Section 3 contains the proof of the lower bound, which is guided by the following idea: suppose that the origin is contained in the occupied set, then perform a tree-like exploration of the cluster containing the origin and dominate it by a sub-critical Galton-Watson branching process. Extinction of the Galton-Watson process implies non-percolation of the cluster. Section 4 contains the proof of the upper bound, which consists in the following coarse-graining procedure: (i) we split space in an infinite collection of balls all having a radius of the order $\sqrt{t}$, (ii) each ball is shown to contain with high probability the starting point of a Wiener sausage whose Newtonian capacity is large enough, and (iii) provided $t$ is large enough, these Wiener sausages form an unbounded connected component. Finally, Section 5 contains the proof of several capacity estimates that we use along Sections 3 and 4.

\section{Preliminaries on Green function and capacity}
\label{S2}
In this section we introduce the notion of capacity. We refer the reader to M\"orters and Peres \cite{MP10} as well as Port and Stone \cite{PS78} for more detailed surveys on this subject.
Let $d\geq 3$ and denote by $\Gamma$ the Gamma function. The Green function associated with Brownian motion on $\R^d$ is defined
as 
\begin{equation}
\label{eq:green}
G(x,y) = \frac{\Gamma(d/2-1)}{2\pi^{d/2}||x-y||^{d-2}}, \quad x,y\in\R^d.
\end{equation}

\begin{definition}
Let $A\subseteq \R^d$ be a Borel set. The energy of a finite Borel measure $\nu$ on $A$
is defined as
\begin{equation}
\label{eq:energy}
\cI(\nu) = \int_A\int_A G(x,y)\nu(\dd x)\nu(\dd y)
\end{equation}
and the Newtonian capacity of $A$ is defined as
\begin{equation}
\label{eq:cap}
\mathrm{cap}(A) = [\inf_{\nu} \cI(\nu)]^{-1},
\end{equation}
where the infimum is over all probability measures on $A$.
\end{definition}
\noindent
Let $A,A'$ be bounded Borel sets. The function $A\mapsto \mathrm{cap}(A)$ is non-decreasing in $A$, satisfies
the scaling relation
\begin{equation}
\label{eq:scaling}
\cp(aA) = a^{d-2} \cp(A), \quad a>0,
\end{equation}
and is a submodular set function:
\begin{equation}
\label{eq:unionbound}
\cp(A\cup A') + \cp(A\cap A') 
\leq \cp(A) + \cp(A').
\end{equation} 
Given a bounded set $A\subseteq \R^d$, let $\tau_A$ be the last exit time of $A$ (with the convention that $\tau_A = 0$ if the Brownian motion does not visit the set $A$). There exists a finite measure $e_A$ on $A$, the equilibrium measure of $A$, such that for any Borel set $\Lambda\subseteq A$ and every $x\in\R^d$ (see Chapter 3, Theorem 2.1 in \cite{PS78}),
\begin{equation}
\label{eq:hittingprob}
\P_x(B_{\tau_A}\in \Lambda, \tau_A>0) 
= \int_\Lambda G(x,y) e_A(\dd y)
\end{equation}
and such that
\begin{equation}
\label{eq:cap.e}
\cp(A) = e_A(A). 
\end{equation}
It moreover has an interpretation in terms of hitting probabilities:
\begin{equation}
\label{eq:hittingint}
\lim_{||x||\to\infty}||x||^{d-2}\P(B^{x}_{[0,\infty)}\cap A \neq \emptyset)
= \frac{\cp (A)}{\kappa_d}, \quad A\subseteq \R^d \mbox{ bounded Borel set},
\end{equation}
where $\kappa_d= 2\pi^{d/2}/\Gamma(d/2-1)$ is the capacity of the unit ball
(see Chapter 3, Theorem 1.10 in \cite{PS78}).
Finally, the Poincaré-Faber-Szeg\"o inequality~\cite{PS51} states that for any bounded, open set $A\subseteq \R^d$
\begin{equation}
\label{eq:PFSstatement}
\Leb_d(A)\lesssim \cp(A)^{d/(d-2)}.
\end{equation}
Here, the proportionality constants only depends on the dimension.
\section{Proof of the lower bound}
\label{S3}
In this section we prove the lower bound of Theorem~\ref{thm:5}. The proof for the case $d\geq 5$ is given in Section~\ref{S3.1} and the proof for the case $d=4$ is given in Section~\ref{S3.2}.
Throughout this section we use the abbreviations
\begin{equation}\label{eq:abb.int}
x\sim y \quad \Longleftrightarrow \quad W_{[0,t]}^{x,r} \cap W_{[0,t]}^{y,r} \neq \emptyset,\qquad \cN(x) = \{y\in\cE\setminus\{x\} \colon x\sim y\},\qquad x,y\in\cE
\end{equation}
and for a set $A\subseteq \R^d$ we write
\begin{equation}
\label{eq:outradius}
M(A)= \sup_{x\in A}||x||
\end{equation}
for the outradius of $A$.
Let us stress that $\cN(x)$ also depends on $t$, so that one may also use the notation $\cN_t(x)$ instead. For ease of readability however we abstain from using $t$ in the notation.

\subsection{Case $d\geq 5$}
\label{S3.1}

\par We use a technique that has been used in the context of Boolean percolation, which consists of exploring the cluster containing the origin and comparing it to a (multitype) Galton-Watson branching process, see for instance Meester and Roy~\cite[Section 3.3]{MR96}. For simplicity, we assume that there is a Poisson point at the origin, which is justified in the proof of Lemma \ref{lemmaA} below. For that purpose we introduce $\bbP^0$ the law of our process after addition of a Brownian motion at the origin. The Wiener sausages intersecting the Wiener sausage starting at the origin are called first generation sausages, all other sausages intersecting the first generation sausages constitute the second generation sausages, and so on. 
This leads to the following decomposition of $\cC(0)$:
\be
\cE_0 = \{0\}, \qquad \cE_{n+1} =
\begin{cases}
\bigcup_{y\in \cE_n} \cN(y) \setminus \bigcup_{k=0}^n \cE_k & \mbox{ if } \cE_n \neq \emptyset\\
\emptyset &\mbox{ if } \cE_n = \emptyset
\end{cases},\quad n\in\N_0.
\ee
Here $\cE_n$ is interpreted as the set of elements in $\cC(0)$ at generation $n$.
The idea is to dominate the process $\{|\cE_n|\}_{n\in\N_0}$ by a branching process which eventually becomes extinct, thus proving that $\cC(0)$ contains finitely many Poisson points, which in turn proves non-percolation.
If this branching process would be close (in some reasonable sense) to a Galton Watson process, then it would be enough to control the mean number of offsprings of the Wiener sausage started at the origin. However, the Wiener sausages of the first generation are not distributed as Wiener sausages but as Wiener sausages conditioned to intersect $W^{0,r}_{[0,t]}$. These are subject to a size biasing effect meaning that their capacities have a bias towards larger values, compared to the unconditioned Wiener sausage. To overcome this difficulty we employ a multitype branching argument. More precisely, we partition the set of Poisson points according to the capacities of their associated Wiener sausages:
\begin{equation}\label{def:classes}
\sC_j = \Big\{x\in \cE \colon \cp(W_{[0,t]}^{x,3r}) \in [j,j+1)tr^{d-4}\Big\}, \qquad j\in\bbN_0.
\end{equation}
The term $tr^{d-4}$ above is due to the fact that $\E[\cp(W^{0,r}_{[0,t]})]$ is bounded from above and from below by a constant times $tr^{d-4}$, which can be deduced from the arguments used in Sections \ref{S5.2}--\ref{S5.3}. The reason to consider Wiener sausages with radius $3r$ instead of $r$ is of technical nature and does not hide anything deep.\\

We now introduce the auxiliary multitype branching process, see Athreya and Ney~\cite{AN72} for the necessary theory (in the case of a finite number of types). First, define
\begin{equation}
\label{eq:defnij}
N(i,j)= \esssup\, \bbE^0 \Big[ \big| \sC_j \cap \cN(0)  \big|\ \Big|\  B^0 \Big]\one\{0 \in \sC_i\}, \qquad i,j\in\bbN_0,
\end{equation}
which will be a parameter of the offspring distribution. Note that the supremum is taken over the realisations of $B^0$. Let $\gz$ be a $\N_0$-valued random variable with $\P(\gz = j) = \bbP^0(0\in\sC_j)$ for $j\in\N_0$ and independently from that, $\{\gz^{(i,j)}_{k,\ell}\}_{i,j,k,\ell}$ be independent Poisson random variables with parameter $N(i,j)$. Let $(Z_n^{(j)})_{j,\in\N_0}$, where $n$ stands for the generation number and $j$ the type, be defined by
\be
\label{eq:defZ0}
Z_0^{(j)} = 
\begin{cases}
1 & \mbox{ if } j=\gz\\
0 & \mbox{ else }
\end{cases}, \quad j\in\N_0,
\ee
and conditionally on $(Z_k^{(i)})_{0\leq k \leq n, i\in\N_0}$,
\be
\label{eq:defZ1}
Z_{n+1}^{(j)} = \sum_{i\in\N_0} \one\{Z_n^{(i)} \geq 1\} \sum_{\ell=1}^{Z_n^{(i)}} \gz^{(i,j)}_{n+1,\ell}.
\ee
In particular, conditionally on 
$(Z_k^{(i)})_{0\leq k \leq n, i\in\N_0}$, the random variables $Z_{n+1}^{(j)}$ with $j\in\N_0$ are Poisson distributed with parameter $\sum_{i\in\N_0}Z_n^{(i)} N(i,j)$. Set $Z_n = \sum_{i\in\N_0} Z_n^{(i)}$ for $n\in\N_0$ and note that if $Z_n=0$, then $Z_{n+m} = 0$ for all $m\in N_0$. We finally define the extinction time $\tau_\ext = \inf\{n\geq 1\colon Z_n = 0\}$.\\

The rest of the section is organized as follows: first we justify with Lemma \ref{lemmaA} why we can add a Poisson point at the origin (even though this can be considered standard, we have not found a rigorous argument in the literature). Lemma \ref{lemmaB} makes the link between $\cC(0)$ and the auxiliary multitype branching process. Then, in Lemma \ref{lemmaC}, we give a sufficient criterion on the kernel $N(i,j)$ defined in \eqref{eq:defnij} for the multitype branching process to become extinct. Lemma~\ref{lem:UB.Ntr} provides an upper bound on the $N(i,j)$'s. Finally, we combine all four lemmas to prove the lower bound in Theorem \ref{thm:5}.

\begin{lemma}
\label{lemmaA}
Let $t, r>0$ be such that $\cC(0)$ is a.s. finite under $\bbP^0$. Then, $\cO_{t,r}$ does not percolate, $\bbP$-a.s.
\end{lemma}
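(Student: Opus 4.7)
My plan is to argue by contraposition: I will show that $\bbP(\cO_{t,r}\text{ percolates}) > 0$ forces $\bbP^0(|\cC(0)| = \infty) > 0$. Assume then that percolation occurs with positive $\bbP$-probability. By the ergodicity of $\cO_{t,r}$ under $\bbR^d$-translations mentioned in Section~\ref{S1.1} (see \cite{EMP13}), the percolation event has in fact probability one. On this event, $\cO_{t,r}=\bigcup_{x\in\cE}W^{x,r}_{[0,t]}$ contains an unbounded connected component; since each individual sausage $W^{x,r}_{[0,t]}$ is a.s.\ bounded, any such component must be the union of infinitely many Wiener sausages. Consequently, the translation-invariant point process
\begin{equation}
S := \{x\in\cE\,:\,|\cC(x)|=\infty\}
\end{equation}
is $\bbP$-a.s.\ infinite. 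Being stationary under shifts and $\bbP$-a.s.\ non-empty, $S$ has a strictly positive intensity, i.e.\ $\bbE[\#(S\cap Q)] > 0$ for $Q=[0,1]^d$.

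I would then identify this intensity using the Slivnyak--Mecke (refined Campbell) formula applied to the marked Poisson point process $\{(x,B^x - x)\}_{x\in\cE}$, whose marks are, conditionally on $\cE$, i.i.d.\ standard Brownian motions. Specializing the formula to the functional $F(x,\cdot)=\ind{x\in Q}\ind{|\cC(x)|=\infty}$, and invoking translation invariance of the whole process to express the resulting Palm distribution at $x$ as a translate of $\bbP^0$, I expect to obtain
\begin{equation}
0\;<\;\bbE\bigl[\#(S\cap Q)\bigr]\;=\;\bbP^0\bigl(|\cC(0)|=\infty\bigr),
\end{equation}
which contradicts the hypothesis. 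The only mildly delicate step is the precise application of the Slivnyak--Mecke formula in this marked setting, but that is standard once one observes that the centred marks $B^x - x$ are i.i.d.\ and independent of the underlying atoms of $\cE$, so I do not anticipate a genuine obstacle.
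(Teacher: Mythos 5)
Your argument is correct and rests on the same key tool as the paper: identifying $\bbP^0$ with the Palm version of $\bbP$ and applying the (refined) Campbell/Mecke formula to equate the intensity of points in infinite clusters with $\bbP^0(|\cC(0)|=\infty)$. The paper simply runs the implication directly (finite clusters under $\bbP^0$ give zero intensity of points in infinite clusters, hence a.s.\ no unbounded component since each sausage is bounded), so your contrapositive framing and the appeal to ergodicity are inessential variations rather than a different route.
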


\begin{proof}
Since $\cE$ is a Poisson point process, $\bbP^0$ coincides with the Palm version of $\bbP$, see Proposition 13.1.VII in Daley and Vere-Jones~\cite{DVJ88}. By definition of the Palm measure, for all bounded Borel sets $A\subseteq\R^d$
\begin{equation}
\bbP^0(|\cC(0)|<\infty) = \frac{1}{\Leb_d(A)} \bbE\Bigg\{\sum_{x\in \cE \cap A} {\bf 1}\{|\cC(x)|<\infty\}\Bigg\}.
\end{equation}
Therefore, if $\bbP^0(|\cC(0)|<\infty) = 1$ then by choosing a sequence of Borel sets $(A_n)_{n\in \N}$ increasing to $\bbR^d$, we get that $\bbP$-a.s.\ all clusters are finite, which proves non-percolation.
\end{proof}

\begin{lemma}
\label{lemmaB}
If $\tau_\ext$ is a.s.\ finite then $\cC(0)$ is $\bbP^0$-a.s.\ finite.
\end{lemma}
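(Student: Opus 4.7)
The plan is to construct a coupling under which $|\cE_n \cap \sC_j| \leq Z_n^{(j)}$ for every $n, j \in \N_0$, so that $\tau_\ext < \infty$ almost surely forces $Z_n = 0$, and hence $\cE_n = \emptyset$, for all $n \geq \tau_\ext$. Since each $Z_n^{(j)}$ is itself a.s.\ finite (Poisson with a.s.\ finite parameter, by induction on $n$), each $\cE_n$ would then be a.s.\ finite as well, and $\cC(0) = \bigcup_{n \geq 0} \cE_n$ would be a finite union of finite sets, hence $\bbP^0$-a.s.\ finite.

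\textbf{Palm-theoretic exploration.} First I would set up a breadth-first revelation of $\cC(0)$ under $\bbP^0$: at generation $0$, reveal $B^0$ together with the index $i$ such that $0 \in \sC_i$; at generation $n+1$, having revealed the Brownian paths attached to every point in $\cE_0 \cup \cdots \cup \cE_n$, form $\cE_{n+1}$ from its definition. By standard Palm theory for the marked Poisson point process $\{(y, B^y) : y \in \cE\}$, the portion not yet revealed remains, conditionally on the revealed data, a marked Poisson process with the original intensity, restricted to those marks whose Wiener sausage does not meet any previously revealed sausage (otherwise $y$ would have been revealed earlier).

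\textbf{Generation-by-generation matching.} Set $Z_0^{(j)} := \one\{0 \in \sC_j\}$, which has the desired law since $\P(\gz = j) = \bbP^0(0 \in \sC_j)$. Assume inductively that $|\cE_n \cap \sC_i| \leq Z_n^{(i)}$ for every $i$. Fix an exploration-measurable enumeration $\cE_n = \{x_1, \ldots, x_{|\cE_n|}\}$ with types $i_1, \ldots, i_{|\cE_n|}$, and let $\cN_k^{(j)}$ be the set of $y \in \sC_j \setminus \bigcup_{m \leq n} \cE_m$ with $y \sim x_k$ but $y \not\sim x_\ell$ for all $\ell < k$. This is a disjoint decomposition of $\cE_{n+1}$ into types. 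By the exploration structure and translation invariance, the conditional law of $|\cN_k^{(j)}|$ given the exploration up to and including $B^{x_k}$ is Poisson with parameter at most
\[
\bbE^0\bigl[|\sC_j \cap \cN(0)| \bigm| B^0 = B^{x_k} - x_k\bigr] \leq N(i_k, j),
\]
where the inequality holds pointwise a.s.\ on the event $\{0 \in \sC_{i_k}\}$ by definition of the essential supremum in \eqref{eq:defnij}. Moreover the $\cN_k^{(j)}$'s are jointly conditionally independent across $k$ and $j$ as disjoint thinnings of the residual marked Poisson process.

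\textbf{Poisson coupling and conclusion.} Enlarge, by the standard monotone coupling between dominated Poisson variables, each $|\cN_k^{(j)}|$ into a variable $\eta_k^{(i,j)} \sim \mathrm{Poi}(N(i_k, j))$ with $\eta_k^{(i,j)} \geq |\cN_k^{(j)}|$, and add independent $\mathrm{Poi}(N(i, j))$ offspring to absorb the slack $Z_n^{(i)} - |\cE_n \cap \sC_i| \geq 0$; then $Z_{n+1}^{(j)} := \sum_i \sum_{k=1}^{Z_n^{(i)}} \eta_k^{(i,j)}$ is $\mathrm{Poi}\bigl(\sum_i Z_n^{(i)} N(i,j)\bigr)$ conditionally on $(Z_n^{(i)})_i$, matching \eqref{eq:defZ1}, and by construction dominates $|\cE_{n+1} \cap \sC_j|$. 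This closes the induction, and the lemma follows from the domination argument of the first paragraph. The principal technical hurdle I foresee is the careful bookkeeping of the Poisson exploration --- in particular the rigorous justification of the residual Poisson property with exclusion of previously revealed marks, and the measurability of the enumeration --- together with ensuring that multiple-parenthood is correctly handled via the ordering trick that makes the $\cN_k^{(j)}$'s disjoint.
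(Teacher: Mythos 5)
Your proposal is correct and follows essentially the same route as the paper: a breadth-first Palm exploration of $\cC(0)$ whose generation-$n$ type counts are stochastically dominated, via the residual-Poisson property and the essential-supremum bound defining $N(i,j)$, by the multitype branching process $(Z_n^{(j)})$. Your version merely makes the coupling more explicit (pointwise domination with added independent ``slack'' offspring, and the first-parent decomposition to avoid double counting), where the paper phrases the same step as stochastic domination of the conditional Poisson parameter of $Y_{n+1}^{(j)}$ by $\sum_i Y_n^{(i)}N(i,j)$.
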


\begin{proof}
Let us define for $j\in\N_0$ and $n\in\bbN_0$, $\cE_n^{(j)} = \cE_n \cap \sC_j$, $Y_n^{(j)} = |\cE_n^{(j)}|$ and $Y_n = \sum_{j\in\N_0} Y_n^{(j)} = |\cE_n|$. Note that $\cC(0)$ is finite if and only if there exists a (random) $n_0$ such that $Y_n = 0$ for $n\geq n_0$, or equivalently, $\sum_{n\in \N_0} Y_n$ converges. The idea is to dominate the  process $(Y_n)_{n\in\N_0}$ by the multitype branching process $(Z_n)_{n\in\N_0}$ defined in \eqref{eq:defZ0} and \eqref{eq:defZ1}. Let us first explain how this domination works for $n=1$. Define
\be
\widetilde{\cE}_n= \bigg\{x\in\cE:\, x\notin \bigcup_{m\leq n} \cE_m\bigg\},\qquad n\in\N_0.
\ee
Conditionally on $B^0$,  $\{\cE_1^{(j)}\}_{j\in\N_0}$ and $\widetilde{\cE}_1$  are independent Poisson point processes on $\bbR^d$ with respective intensity measures $\{\bbP^0(x\in\cN(0)\cap \sC_j | B^0)\dd x \}_{j\in\N_0}$ and $\bbP^0(x\notin\cN(0) | B^0)\dd x$. Recall \eqref{eq:defnij}. Conditionally on $B^0$, and on the event $\{0\in\sC_i\}$, $Y_1^{(j)}$ is therefore a Poisson random variable with a parameter smaller than $N(i,j)$, hence $Y_1^{(j)}$ is stochastically dominated by $Z_1^{(j)}$, as defined in \eqref{eq:defZ0}--\eqref{eq:defZ1}, for all $j\in\N_0$. Consequently, $Y_1$ is dominated by $Z_1$.

We now explain the iteration procedure. Let $\cG_n$ be the $\sigma$-field generated by $(\cE_k)_{k\leq n}$ and $(B^x,\ x\in \cup_{k\leq n} \cE_k)$ and let the properties ($\cH_1^{n}$) and ($\cH_2^{n}$) be defined by:\\
\noindent --- {\bf ($\cH_1^n$)} Conditionally on $\cG_{n-1}$, on the event $\{\cE_{n-1} \neq \emptyset\}$, the random sets $\{\cE_n^{(j)}\}_{j\in\N_0}$ and $\widetilde{\cE}_n$ are independent Poisson point processes with respective intensity measures $\{\bbP^0(x\in \cup_{y\in\cE_{n-1}} \cN(y) \cap \sC_j  \setminus \cup_{i=0}^{n-1} \cE_i  | \cG_{n-1})\dd x \}_{j\in\N_0}$ and $\bbP^0(x\notin\cup_{y\in\cE_{n-1}} \cN(y)  \cup_{i=0}^{n-1} \cE_i | \cG_{n-1})\dd x$;\\
\noindent --- {\bf ($\cH_2^n$)} Conditionally on $\cG_{n-1}$, on the event $\{\cE_{n-1} \neq \emptyset\}$, $Y_n^{(j)}$ is stochastically dominated by $Z_n^{(j)}$ for all $j\in\N_0$.\\
\noindent We now suppose that {\bf ($\cH_1^n$)} and {\bf ($\cH_2^n$)} are true and sketch how to conclude that {\bf ($\cH_1^{n+1}$)} and {\bf ($\cH_2^{n+1}$)} are also true, see Procaccia~and~Tykesson~\cite[Lemmas 7.1--7.2]{PT11} for details in a similar context. The sets $\{\cE_{n+1}^{(j)}\}_{j\in\N_0}$ and $\widetilde{\cE}_{n+1}$ are clearly disjoint and contained in $\widetilde{\cE}_n$, which, by induction hypothesis, is independent from $\cE_n$. Therefore, conditionnally on $\cE_n$, $\{\cE_{n+1}^{(j)}\}_{j\in\N_0}$ and $\widetilde{\cE}_{n+1}$ are independent Poisson processes with the desired intensity measures, which proves {\bf ($\cH_1^{n+1}$)}. We now turn to the proof of ($\cH_2^{n+1}$). Let $j\in \N_0$. By removing the restriction $x\notin \cup_{i=0}^n \cE_i$ and using the decomposition $\cE_n = \cup_{i=0}^\infty\cE_n^{(i)}$, we get the upper bound
\be
\bbP^0(x\in \cup_{y\in\cE_n} \cN(y) \cap \sC_j  \setminus  \cup_{i=0}^n \cE_i | \cG_n, x\in\cE) \leq \sum_{i\in \N_0} \bbP^0(x\in \cup_{y\in\cE^{(i)}_n} \cN(y) \cap \sC_j | \cG_n, x\in\cE).
\ee
By using translation invariance, conditionally on $\cG_n$, the parameter of $Y_{n+1}^{(j)}$ is bounded from above by $\sum_{i\in \N_0} Y_n^{(i)} N(i,j)$. Using {\bf ($\cH_2^n$)}, \eqref{eq:defZ1} and the remark following it, we see that $Y_{n+1}^{(j)}$ is stochastically dominated by $Z_{n+1}^{(j)}$. This settles {\bf ($\cH_2^{n+1}$)}. One may now show by iteration that the total number of particles in the branching process defined above dominates $|\cC(0)|$, which completes the proof.
\end{proof}

In the lemma below, $(N^k\one)(i)$ is the $i$-th term of the sequence $N^k\one$, $N^k$ being the $k$-th power of the (infinite) matrix $N$ and $\one$ being the (infinite) vector whose entries are all equal to one. In other terms, $(N^k\one)(i) = \sum_{j\in \N_0} N^k(i,j)$.
\begin{lemma}
\label{lemmaC}
If the series $\sum_{k\in\N_0} (N^k\one)(i)$ converges for all $i\in\N_0$, then $\tau_\ext$ is a.s.\ finite.
\end{lemma}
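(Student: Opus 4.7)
The plan is to run the standard first-moment argument for multitype branching processes, specialised to the countably infinite type space at hand. I would read the hypothesis $\sum_{k\in\N_0}(N^k\one)(i)<\infty$ as saying that, starting from a single type-$i$ individual, the expected total progeny summed over all generations is finite; non-negativity then forces a.s.\ finiteness of the total progeny, hence extinction.

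First I would compute the mean matrix of the process $(Z_n^{(j)})_{n,j}$. From \eqref{eq:defZ0}--\eqref{eq:defZ1}, conditionally on the history up to generation $n$, the variable $Z_{n+1}^{(j)}$ is Poisson with parameter $\sum_{i\in\N_0}Z_n^{(i)}N(i,j)$, hence has this conditional mean. By induction on $n$ combined with Tonelli (legitimate by non-negativity), and writing $\E_i$ for the expectation conditional on $\gz=i$ in \eqref{eq:defZ0}, one obtains
\[
\E_i\big[Z_n^{(j)}\big] = N^n(i,j), \qquad \E_i[Z_n] = (N^n\one)(i).
\]

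Summing over $n$ and invoking Tonelli once more,
\[
\E_i\Big[\sum_{n\in\N_0} Z_n\Big] = \sum_{n\in\N_0}(N^n\one)(i) < \infty
\]
by hypothesis. Consequently $\sum_n Z_n$ is almost surely finite under $\E_i$; since each $Z_n$ is $\N_0$-valued, the series can only have finitely many nonzero terms, so $Z_n=0$ for all large $n$, i.e.\ $\tau_\ext<\infty$ almost surely under the conditioning $\{\gz=i\}$. Unconditioning on the type of the initial particle via
\[
\P(\tau_\ext<\infty) = \sum_{i\in\N_0}\bbP^0(0\in\sC_i)\,\P(\tau_\ext<\infty\,|\,\gz=i) = \sum_{i\in\N_0}\bbP^0(0\in\sC_i) = 1
\]
then concludes the proof.

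The argument has no genuine conceptual obstacle; the only care needed is that the type space is countably infinite, so that the iteration $N^{n+1}(i,j)=\sum_{k\in\N_0}N^n(i,k)N(k,j)$ and the interchanges of expectations with infinite sums must be justified by Tonelli. Non-negativity of the $N(i,j)$ and $Z_n^{(j)}$ makes this a bookkeeping issue rather than a real difficulty; note also that the hypothesis, specialised to $k=1$, already forces $(N\one)(i)<\infty$, so every row sum of $N$ is finite, which is the minimal integrability needed for the induction to make sense.
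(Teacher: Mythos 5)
Your proposal is correct and is essentially the paper's proof: the paper likewise observes that $\sum_{k\in\N_0}(N^k\one)(i)=\E\bigl(\sum_{k\in\N_0}Z_k\,\big|\,\gz=i\bigr)$ and concludes finiteness of the total progeny, hence extinction. You merely spell out the induction and the Tonelli interchanges that the paper leaves implicit.
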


\begin{proof}
Note that $\sum_{k\in\N_0} (N^k\one)(i) = \E(\sum_{k\in\N_0} Z_k | Z_0 = i)$. If the latter is finite for all $i\in\N_0$ then $P(\sum_{k\in\N_0} Z_k < \infty | Z_0=i ) = 1$ for all $i\in\N_0$, which implies that 
$\sum_{k\in\N_0} Z_k$ is finite a.s. Thus, $\tau_\ext$ is a.s.\ finite.
\end{proof}

The lemma stated below provides an upper bound on $N(i,j)$. Its proof is quite technical and is therefore deferred to the end of the section.
\begin{lemma}\label{lem:UB.Ntr}
Let $d\geq 5$. Fix $\varepsilon>0$ and choose $t=\varepsilon r^{(4-d)/2}$. There exists $j_0\in\bbN$, $\ga>1$ and $r_0>0$ such that
\begin{equation}
N(i,j) \lesssim  \gep^2i^2 \one_{\{j\leq j_0\}} + i^\ga e^{-jt/2} \one_{\{j> j_0\}},\qquad i,j\in\bbN_0,\quad\mbox{ for all }r\leq r_0.
\end{equation}
\end{lemma}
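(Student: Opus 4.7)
The plan is to reduce $N(i,j)$ to an annealed volume computation for a generalized Wiener sausage and then to plug in two capacity estimates that are proved in Section~\ref{S5}: a Spitzer-type moment bound and an exponential upper-tail large deviation. The only non-trivial bookkeeping is to keep track of the capacity class of the ``root particle'' when taking the essential supremum.

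First, by the Slivnyak-Mecke formula for Poisson processes, and writing a Brownian motion starting at $x$ as $B^x=x+\tilde B$ with $\tilde B$ an independent standard Brownian motion from $0$, the event $\{x\sim 0\}$ becomes $x\in W^{0,2r}_{[0,t]}\oplus(-\tilde B_{[0,t]})$, while $\cp(W^{x,3r}_{[0,t]})=\cp(\tilde B_{[0,t]}\oplus\cB(0,3r))$ by translation invariance of the Newtonian capacity. Integrating over $x$, conditionally on $B^0$,
\begin{equation*}
\bbE^0\big[|\sC_j\cap\cN(0)|\bigm|B^0\big] = \tilde\E\Big[\Leb_d\big(W^{0,2r}_{[0,t]}\oplus(-\tilde B_{[0,t]})\big)\,\one_{\{\cp(\tilde B_{[0,t]}\oplus\cB(0,3r))\in I_j\}}\Big],
\end{equation*}
where $I_j:=[j,j+1)tr^{d-4}$.

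Next I would invoke two inputs from Section~\ref{S5}: a Spitzer-type $L^p$-moment bound for the volume of a ``sausage with body $K$'',
\begin{equation*}
\tilde\E\big[\Leb_d(K\oplus(-\tilde B_{[0,t]}))^p\big]^{1/p}\lesssim \cp(K)\,t+\Leb_d(K),\qquad K\subset\bbR^d\text{ compact},\ p\in\{1,2\},
\end{equation*}
and an exponential upper tail for the Wiener-sausage capacity,
\begin{equation*}
\tilde\P\big(\cp(\tilde B_{[0,t]}\oplus\cB(0,3r))\geq jtr^{d-4}\big)\lesssim e^{-cjt},\qquad j\geq j_0.
\end{equation*}
Specializing the first bound to $K=W^{0,2r}_{[0,t]}$ and using monotonicity of capacity gives $\cp(K)\leq\cp(W^{0,3r}_{[0,t]})\leq(i+1)tr^{d-4}$ on $\{0\in\sC_i\}$, so $\cp(K)\,t\leq(i+1)\gep^2$ for the choice $t=\gep r^{(4-d)/2}$; the Poincaré-Faber-Szegő inequality~\eqref{eq:PFSstatement} further forces $\Leb_d(K)\lesssim((i+1)tr^{d-4})^{d/(d-2)}$, which vanishes as $r\to0$ whenever $d\geq 5$.

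For $j\leq j_0$ I would bound the indicator by $1$ and use the $p=1$ estimate to get $N(i,j)\lesssim\gep^2(i+1)\lesssim\gep^2 i^2$ after absorbing constants. For $j>j_0$, Hölder's inequality at a suitable exponent $p\in(1,2)$, combined with the two displays above, produces $N(i,j)\lesssim(i+1)\gep^2\cdot e^{-cjt(p-1)/p}$; tuning $p$ so that $c(p-1)/p\geq 1/2$ yields $N(i,j)\lesssim i^\ga e^{-jt/2}$ for some $\ga>1$ absorbing the $(i+1)$-prefactor uniformly in $r$. The main obstacle is really not the assembly above but the two Section~\ref{S5} inputs themselves: the Spitzer-type moment estimate at an arbitrary compact body, with the right uniform dependence on $\cp(K)$, $\Leb_d(K)$ and $t$, and the \emph{exponential} capacity upper tail (as opposed to a weaker sub-exponential one) for the Wiener sausage in $d\geq 5$. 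Once these are in hand, the present lemma reduces to the algebraic combination sketched above, together with a careful accounting of the essential supremum over $B^0$ through the uniform bound $\cp(W^{0,3r}_{[0,t]})\leq(i+1)tr^{d-4}$.
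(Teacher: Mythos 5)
Your reduction via the Mecke/Campbell formula to the annealed volume $\tilde\E[\Leb_d(W^{0,2r}_{[0,t]}\oplus(-\tilde B_{[0,t]}))\one\{\cdot\}]$ is exactly the paper's starting point for the regime $j\leq j_0$, and your treatment of $j>j_0$ (Hölder applied to the $\tilde B$-expectation of the volume against the capacity tail) is a genuinely different and arguably cleaner route than the paper's, which applies Cauchy--Schwarz pointwise in $x$ and is then forced to control $\int\P(B^x_{[0,t]}\cap A\neq\emptyset)^{1/2}\,\dd x$ by splitting at $\|x\|=3M(A)$ and using Doob's inequality. Your route avoids that non-integrability issue entirely, at the price of needing an $L^p$ moment bound for the volume with $p>1$; this is fillable (e.g.\ $\tilde\E[\mathrm{Vol}^2]\leq 2(\tilde\E[\mathrm{Vol}])^2$ by splitting the double integral according to which translate of $K$ is hit first and using the Markov property), but you should say so, since it is not among the Section~\ref{S5} results.

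The genuine gap is your ``Spitzer-type'' input itself: the bound
\begin{equation*}
\tilde\E\big[\Leb_d(K\oplus(-\tilde B_{[0,t]}))\big]\lesssim \cp(K)\,t+\Leb_d(K)
\end{equation*}
is false for general compact $K$ (take $K=\partial\cB(0,\rho)$ with $\rho\gg\sqrt t$: the left side is at least $c\rho^d$ while the right side is $ct\rho^{d-2}$), and it is not available uniformly over the realisations of $B^0$ in $\sC_i$ that the essential supremum forces you to consider. What Spitzer's inequality actually gives is $\Leb_d(K)+t\cp(K)+J(K)$ with $J(K)=\int\P(B^x_{[0,\infty)}\cap K\neq\emptyset)^2\dd x$, and the paper must work to control $J$: splitting at $d(x,K)=r$ and applying Jensen with respect to the normalised equilibrium measure yields $J(K)\lesssim\Leb_d(K\oplus\cB(0,r))+\cp(K)^2r^{4-d}$. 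On $\{0\in\sC_i\}$ with $t=\gep r^{(4-d)/2}$ the term $\cp(K)^2r^{4-d}\asymp (i+1)^2\gep^2$ \emph{dominates} $t\cp(K)\asymp(i+1)\gep^2$ for large $i$; it is precisely the origin of the $\gep^2 i^2$ in the lemma. In your write-up the quadratic dependence appears only through the lossy (and at $i=0$ vacuous) step ``$\gep^2(i+1)\lesssim\gep^2 i^2$'', which masks the fact that the missing $J$-term must be estimated and genuinely contributes at order $i^2$. Once you restore the $J$-analysis (and supply the $L^p$ volume bound), the rest of your assembly — the large deviation input from Lemma~\ref{lem:ldestimate}, the Poincaré--Faber--Szegő control of $\Leb_d(K\oplus\cB(0,r))$, and the absorption of polynomial $i$-prefactors into $i^\ga$ — goes through as you describe.
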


We may now prove the lower bound in Theorem \ref{thm:5}.

\begin{proof}[Proof of the lower bound in Theorem \ref{thm:5}]
Let $t = \gep r^{(4-d)/2}$. We want to prove that there exists $\gep$ small enough such that, for all $r$ small enough, $\cO_{t,r}$ does not percolate.
Fix $i\in\N_0$. By Lemmas~\ref{lemmaA}--\ref{lemmaC}, it is enough to prove that for this choice of $t$ and $r$, the series $\sum_{k\geq 1} (N^k\one)(i)$ converges when $r$ is chosen small enough.
Note that there exists $r_0=r_0(\gep)$ small enough such that
\begin{equation}
\label{eq:largerC}
\sum_{j > j_0} j^{2\vee \ga} e^{-j t/2} \leq  (j_0+1)^{1+2\vee\ga} \gep^2, \qquad r\leq r_0.
\end{equation}
We are now going to prove that there exists $\tilde{r}_0=\tilde{r}_0(\gep)>0$ such that
\begin{equation}
\label{eq:iteration}
(N^k\one)(i) \leq 2 i^{2\vee\ga} (2c(j_0+1)^{1+2\vee\ga}\gep^2)^k, \qquad r\leq \tilde{r}_0,\quad k\in\N_0,
\end{equation}
where $c$ denotes the proportionality constant from Lemma~\ref{lem:UB.Ntr}.
To see that \eqref{eq:iteration} is true, note that by Lemma~\ref{lem:UB.Ntr},
\begin{equation}
\label{eq:iterationstart}
\sum_{j=0}^{\infty} N(i,j) \leq c(j_0+1)\varepsilon^2 i^2 + c\frac{i^{\alpha}}{1-e^{-t/2}} e^{-j_0t/2}
\end{equation}
By the choice of $t$ there is $r=r_1(\varepsilon) >0$ such that the second term on the right hand side of~\eqref{eq:iterationstart} is at most $c(j_0+1)i^{\alpha} \varepsilon^2$ for all $r\leq r_1$. Thus,~\eqref{eq:iteration} holds for $k=1$.
Assume now that \eqref{eq:iteration} has been proven for some $k\in\N$.  Then, for all $r\leq \tilde r_0 := r_0\wedge r_1$,
\begin{equation}
\label{eq:iterationend}
\begin{aligned}
(N^{k+1}\one)(i) &= \sum_{j=0}^{\infty} N(i,j)(N^{k}\one)(j)\\
&\leq (2c(j_0+1)^{1+2\vee\ga}\varepsilon^2)^k\sum_{j=0}^{\infty} 2j^{2\vee\ga}N(i,j)\\
&:= {\rm I} + {\rm II},
\end{aligned}
\end{equation}
where ${\rm I}$ and ${\rm II}$ equal the middle term in~\eqref{eq:iterationend} with the sum restricted to $j\leq j_0$ and $j>j_0$ respectively. An application of Lemma~\ref{lem:UB.Ntr} and Equation~\eqref{eq:largerC} shows the validity of~\eqref{eq:iteration} with $\tilde r_0 = r_0\wedge r_1$.
Choosing $\varepsilon >0$ such that $2c(j_0+1)^{1+2\vee\ga}\gep^2 <1$ yields the claim.
\end{proof}

We are left with proving Lemma~\ref{lem:UB.Ntr}. We shall makes use of the following lemma, whose proof is deferred to Section~\ref{S5.4}.
\begin{lemma}
\label{lem:ldestimate}
Let $d\geq 5$ and $r>0$. There exists $j_0$ large enough such that:
\be
\P(\cp(\ws{0,r}{[0,t]}) \geq j tr^{d-4}) \leq e^{-cjt},\qquad j\ge j_0.
\ee
\end{lemma}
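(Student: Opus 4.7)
The approach is to decompose the Wiener sausage into a sum over independent unit-time pieces via sub-additivity of capacity, and then to apply a Chernoff-type bound; the central technical input is an exponential moment bound on the capacity of a unit-time Wiener sausage.

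Setting $N := \lceil t\rceil$ and splitting $[0,t]$ into $N$ consecutive subintervals of length at most one, the strong Markov property of Brownian motion combined with the translation invariance of capacity shows that the $k$-th piece of the sausage has the same distribution (after translation) as an independent Wiener sausage of length at most one, and each such piece can be enlarged to a full unit-length sausage using fresh Brownian motions. Sub-modularity \eqref{eq:unionbound} of $\cp$ and monotonicity then yield
\begin{equation*}
\cp\bigl(\ws{0,r}{[0,t]}\bigr)\leq \sum_{k=1}^N X_k,
\end{equation*}
where $(X_k)_{k=1}^N$ are i.i.d.\ with common distribution that of $X:=\cp(\ws{0,r}{[0,1]})$.

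The core estimate is then an exponential moment bound of the form
\begin{equation*}
\E\bigl[\exp\bigl(\lambda_0\, r^{4-d}\, X\bigr)\bigr]\leq C_0,
\end{equation*}
for some $\lambda_0, C_0>0$ that do not depend on $r$ (for $r$ sufficiently small). This will be obtained from a high-moment estimate of the form $\E[X^m]\leq m!\,(Cr^{d-4})^m$ for all $m\in\N$, proved using the Green's-function-based capacity moment estimates developed in Sections~\ref{S5.2}--\ref{S5.3}; summing the Taylor series with $\lambda_0 C<1$ then yields the MGF bound. Plugging into the exponential Markov inequality gives
\begin{equation*}
\P\bigl(\cp(\ws{0,r}{[0,t]})\geq jtr^{d-4}\bigr)\leq C_0^N\,e^{-\lambda_0 jt},
\end{equation*}
and choosing $j_0 := \max(1, 4\log C_0/\lambda_0)$ bounds this by $e^{-(\lambda_0/2)jt}$ whenever $j\geq j_0$ and $t\geq 1$, proving the lemma with $c = \lambda_0/2$. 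The small-$t$ regime is handled directly by the single-variable MGF bound applied to $X_1$.

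The main obstacle is the exponential moment bound itself. The crude estimate $\cp(\ws{0,r}{[0,1]})\leq \kappa_d(M+r)^{d-2}$, with $M$ the Gaussian-tailed outradius of a unit-time Brownian motion, only produces stretched-exponential tails of index $2/(d-2)<1$ for $d\geq 5$, which makes $r^{4-d}\cp(\ws{0,r}{[0,1]})$ have an infinite MGF on every interval. A sharp moment bound must therefore exploit both the thinness of the sausage and the fact that two independent Brownian paths typically do not meet in $d\geq 5$; this is precisely what the Green's-function computations of Sections~\ref{S5.2}--\ref{S5.3} are designed to produce.
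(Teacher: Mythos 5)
Your skeleton (unit-time decomposition, sub-additivity of capacity, Chernoff bound, choice of $j_0$ to absorb the constant $C_0^N$) is exactly the paper's. The genuine gap is at the step you yourself flag as "the main obstacle": the exponential moment bound $\E[\exp(\lambda_0 r^{4-d}\cp(\ws{0,r}{[0,1]}))]\le C_0$, which you propose to derive from a factorial moment bound $\E[X^m]\le m!\,(Cr^{d-4})^m$ "proved using the Green's-function-based capacity moment estimates developed in Sections~\ref{S5.2}--\ref{S5.3}". Those sections deliver only a first-moment lower bound (Lemma~\ref{lem:Greenbound} via an energy computation) and a second-moment upper bound (Lemma~\ref{lem:uppercap2}), and the latter is itself obtained from the crude outradius bound $\cp(\ws{0,1}{[0,1]})\lesssim (1+\sup_s\|B_s\|)^{d-2}$ that you correctly reject as yielding only stretched-exponential tails. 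Nothing in those sections produces all moments with $m!\,C^m$ growth, and no routine extension of the energy computation does either; so the key estimate is asserted rather than proved. Your diagnosis that the bound "must exploit the fact that two independent Brownian paths typically do not meet in $d\ge5$" also points in the wrong direction: the needed estimate has nothing to do with non-intersection.

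The paper closes this gap with a short, elementary covering argument (after first reducing to $r=1$ via the scaling relation $\cp(\ws{0,r}{[0,t]})\stackrel{\text{(law)}}{=}r^{d-2}\cp(\ws{0,1}{[0,tr^{-2}]})$, which sidesteps your uniformity-in-$r$ issue entirely). Set $T_0=0$ and $T_{n+1}=\inf\{s\ge T_n\colon |B_{s+T_n}-B_{T_n}|\ge 1\}$, and let $N$ be the number of these times in $[0,1]$. Then $\ws{0,1}{[0,1]}\subseteq\bigcup_{k\le N}\cB(B_{T_k},2)$, so sub-additivity \eqref{eq:unionbound} gives $\cp(\ws{0,1}{[0,1]})\le cN$; and since the increments $T_i-T_{i-1}$ are i.i.d.\ with $\E(e^{-\gamma T_1})$ arbitrarily small for $\gamma$ large (Sznitman), $\P(N\ge n)\le e^{\gamma}\E(e^{-\gamma T_1})^{n}$ decays faster than any exponential, giving \eqref{exp.mom.cap} for every $\lambda>0$. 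This argument is dimension-independent; the exponent $tr^{d-4}$ in the lemma enters only through the scaling step at the very end. To complete your proof you must supply this (or an equivalent) argument for the unit-time exponential moment; the rest of your outline then goes through.
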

\begin{proof}[Proof of Lemma \ref{lem:UB.Ntr}] 
We divide the proof in two parts: {\bf (1)} is the estimate for $j\leq j_0$ (which actually holds for all $j$), and {\bf (2)} is the estimate for $j> j_0$.

\noindent {\bf (1)} Let $A$ be a compact set. Define
\begin{equation}
\label{eq:NA}
N(A) = \bbE^0\Big[ |\{x\in \cE \colon B^x_{[0,t]}\cap A\neq \emptyset \}|\  \Big|\ B^0\ \Big].
\end{equation}
The reason why we condition on $B^0$ is that we later choose $A = \overline{W^{0,2r}_{[0,t]}}$. We first prove a general upper bound of the form
\begin{equation}
N(A) \lesssim t \cp(A) + \cp(A)^2.
\end{equation}
Campbell's formula yields
\begin{equation}
\label{eq:NACampbell}
\begin{aligned}
N(A) &= \int_{\bbR^d} \P(B^x_{[0,t]}\cap A\neq \emptyset)\, \dd x\\
&= \Leb_d(A) + \int_{\bbR^d\setminus A} \P(B^x_{[0,t]}\cap A\neq \emptyset)\,\dd x.
\end{aligned}
\end{equation}
Following Spitzer~\cite[Eq. (2.8)]{S64}, we obtain
\begin{equation}
\ba
\int_{\bbR^d\setminus A} \P(B^x_{[0,t]}\cap A\neq \emptyset)\, \dd x &\leq t\cp(A) + J(A),\quad
\\
&\mbox{where} \quad  J(A) = \int_{\bbR^d\setminus A} \P(B^x_{[0,\infty)}\cap A\neq \emptyset)^2\, \dd x.
\ea
\end{equation}
We are left with giving an upper bound on $J(A)$. To that end we write
\begin{equation}
\begin{aligned}
\label{eq:J(A)}
J(A)&=  \int_{d(x,A) \leq r} \P(B^x_{[0,\infty)}\cap A\neq \emptyset)^2\, \dd x+ 
\int_{d(x,A) > r} \P(B^x_{[0,\infty)}\cap A\neq \emptyset)^2\, \dd x\\
&= J_1(A)+J_2(A).
\end{aligned}
\end{equation}
We first derive an estimate on $J_2(A)$.
By~\eqref{eq:hittingprob} and \eqref{eq:green}, 
\begin{equation}
\P(B^x_{[0,\infty)}\cap A\neq \emptyset) = \kappa_d^{-1}\int_{y\in A} ||x-y||^{2-d} e_A(\dd y),
\end{equation}
where $e_A$ is the equilibrium measure of $A$.
By \eqref{eq:cap.e}, the measure $e_A(\dd y)/\cp(A)$ is a probability measure. Therefore, by Jensen's inequality,
\begin{equation}
\begin{aligned}
\label{eq:J2bound}
J_2(A) &=  \kappa_d^{-2}\int_{d(x,A) > r} \Big( \int_{y\in A} ||x-y||^{2-d} \frac{e_A(\dd y)}{\cp(A)} \Big)^2\,\cp(A)^2\, \dd x\\
&\lesssim \int_{y\in A} \int_{d(x,A) > r} ||x-y||^{4-2d}\,\dd x\, e_A(\dd y)\, \cp(A).\\
\end{aligned}
\end{equation}
Since $d(x,A)>r$ implies $||x-y||>r$ for all $y\in A$, we get
\begin{equation}
J_2(A) \lesssim \cp(A)^2 \times \int_{||x||\geq r}  ||x||^{4-2d}\, \dd x
\lesssim \cp(A)^2\, r^{4-d}.
\end{equation}
As for $J_1(A)$ we use the simple estimate $J_1(A) \leq \Leb_d(A\oplus \cB(0,r))$. We now replace $A$ by $\overline{W^{0,2r}_{[0,t]}}$. Using that $\overline{W^{0,2r}_{[0,t]}} \oplus \cB(0,r) = W^{0,3r}_{[0,t]}$ and the Poincaré-Faber-Szeg\"o inequality~\eqref{eq:PFSstatement}, we obtain 
\begin{equation}
J_1\Big(\overline{W^{0,2r}_{[0,t]}}\Big) \lesssim \cp(W^{0,3r}_{[0,t]})^{d/(d-2)} \lesssim r^{4-d}\cp(W^{0,3r}_{[0,t]})^2.
\end{equation}
To obtain the last inequality above, note that if $0\in\sC_i$ and for $r$ small enough
\be
\cp(W^{0,3r}_{[0,t]})^{d/(d-2) - 2} \lesssim (itr^{d-4})^{(4-d)/(d-2)} \lesssim (r^{4-d})^{(d-4)/(d-2)}\leq r^{4-d}. 
\ee

Adding up the upper bounds for $J_1$ and $J_2$, and using that $0\in \sC_i$, we may conclude this part of the proof.

{\bf (2)} Let $A$ be a measurable set and define
\begin{equation}
\label{eq:NjA}
N_j(A) = \bbE\Big[ \mid \{x\in\fC_j \colon B^x_{[0,t]} \cap A \neq \emptyset\} \mid \Big].
\end{equation}
By using first Campbell's formula and Cauchy-Schwarz in the probability inside the integral below, and noting that $\P(x\in\sC_j)$ does not depend on $x$,
\begin{equation}
\label{eq:NA(j)}
N_j(A) = \int_{\bbR^d} \P(x\in \sC_j,\ B^x_{[0,t]} \cap A \neq \emptyset)\, \dd x \leq \P(0\in\sC_j)^{1/2} \int_{\bbR^d} \P(B^x_{[0,t]} \cap A \neq \emptyset)^{1/2}\, \dd x.
\end{equation}
In {\bf (a)} and {\bf (b)} below we give an upper bound on $\P(0\in\sC_j)$ and $\int_{\bbR^d} \P(B^x_{[0,t]} \cap A \neq \emptyset)^{1/2}\, \dd x$, respectively.
\noindent {\bf (a)} By \eqref{def:classes} and Lemma \ref{lem:ldestimate},
\be
\P(0\in\sC_j) \leq \P(\cp(\ws{0, 3r}{[0,t]}) \geq j t r^{d-4}) \leq e^{-2jt}.
\ee

\noindent {\bf (b)} Recall the definition in \eqref{eq:outradius}. We write the integral in \eqref{eq:NA(j)} as $I_1+I_2$, where
\be
\label{eq:I1I2}
I_1 = \int_{||x||\leq 3M(A)} \P(B^x_{[0,t]} \cap A \neq \emptyset)^{1/2}\, \dd x \qquad \mbox{and} \qquad I_2 = \int_{||x||> 3M(A)} \P(B^x_{[0,t]} \cap A \neq \emptyset)^{1/2}\, \dd x.
\ee
Then, $I_1 \lesssim M(A)^d$. We now replace $A$ by $\ws{0, 2r}{[0,t]}$. Note that 
\be
\label{eq:volumeest}
\Leb_d(\ws{0,2r}{[0,t]}) \gtrsim r^{d-1} M(B^0_{[0,t]}),\qquad \mbox{thus} \qquad M(\ws{0,2r}{[0,t]})^d \lesssim r^{(1-d)d} \Leb_d(\ws{0,2r}{[0,t]})^d+r^d.
\ee
To see that the left hand inequality in \eqref{eq:volumeest} is true assume for simplicity that $M(B^{0}_{[0,t]}) = 4kr$ for some $k\in\N$. The extension to all other values of $M(B^{0}_{[0,t]})$ is straightforward. By continuity of Brownian motion in time, we may choose a sequence of points $x_i\in B_{[0,t]}^{0}$, $1\leq i\leq k$, such that $||x_i||=4ir$.
It readily follows that $\Leb_d(\bigcup_{i=1}^{k}\cB(x_i,2r))\gtrsim r^{d-1}M(B_{[0,t]}^{0})$ from which we may deduce the desired inequality with ease.
We now apply the Poincaré-Faber-Szeg\"o inequality~\eqref{eq:PFSstatement} to the right hand side of \eqref{eq:volumeest} and obtain
\be
\label{eq:PFS}
I_1 \lesssim r^{(1-d)d} \cp(\ws{0,2r}{[0,t]})^{d^2/(d-2)} +r^d.
\ee 
We now consider $I_2$. Note that $B^x_{[0,t]} \cap A \neq \emptyset$ and $||x||> 3M(A)$ imply that $B^x$ has travelled at least a distance $||x||/2$ before time $t$. Therefore,
\be\label{eq:I2bound}\ba
I_2 &\leq \int_{||x||> 3M(A)} \P\Big(\sup_{s\in [0,t]} || B^0_s || \ge || x ||/2\Big)^{1/2}\, \dd x\\
& \lesssim \int_{||x||> 3M(A)}  e^{-||x||^2/(8dt)}\, \dd x,\qquad \mbox{ by Doob's inequality,}\\
& \lesssim t^{d/2}.
\ea\ee
The last inequality follows by substituting $\overline{x}= x/\sqrt{t}$.
Finally, \eqref{eq:PFS}--\eqref{eq:I2bound} yield
\be
\label{eq:I1plusI2}
I_1 + I_2 \lesssim r^{(1-d)d} \cp(\ws{0, 2r}{[0,t]})^{d^2/(d-2)} +r^d + t^{d/2}.
\ee
Thus,
using $0\in\fC_i$, $t=\gep r^{(4-d)/2}$, \eqref{eq:I1plusI2} and \eqref{eq:NA(j)}--\eqref{eq:I1I2} we see that there exists an exponent $\ga>0$ and a constant $c_{\gep}$ such that
\be
N(i,j) \leq c_{\gep} (t(i+1))^\ga e^{-jt}.
\ee
However, we choose $r$ small enough such that $c_{\gep}e^{-jt/4}\leq 1$ and $t^{\alpha}e^{-jt/4}\leq 1$.
Hence, for this choice of $r$, we may write
\be
N(i,j) \lesssim i^\ga e^{-jt/2},\qquad j\geq j_0,
\ee
where the proportionality constant does not depend anymore on $\gep$. This concludes the proof.
\end{proof}

\subsection{Case $d=4$}
\label{S3.2}

We now turn to the case $d=4$. The idea of the proof is similar to the one in Section~\ref{S3.1}. However, the estimates in \eqref{eq:volumeest}--\eqref{eq:PFS} are not sufficiently sharp anymore. To overcome this difficulty, we choose a finer partition of $\cE$, namely
\be\label{eq:C4}
\sC_{i_1,i_2} = \Big\{x\in \cE \colon \cp(W_{[0,t]}^{x,3r}) \in [i_1,i_1+1)t/|\log r|,\, M(W_{[0,t]}^{x,2r})\in [i_2,i_2+1)t\Big\}, \qquad i_1, i_2 \in\bbN_0,
\ee
and define
\be\label{eq:N4}
 N(i_1,i_2;j_1,j_2)= \esssup\  \bbE^0 \Big[ \big| \sC_{j_1,j_2}\cap \cN(0)\big| \mid B^0 \Big]\one\{0\in \sC_{i_1,i_2}\},\qquad i_1,i_2,j_1,j_2\in\bbN_0.
\ee
The construction of the auxiliary branching process from \eqref{eq:C4} and \eqref{eq:N4} and the corresponding domination argument works along similar lines as in Section~\ref{S3.1}. We omit the details. The main difference to the proof in Section~\ref{S3.1} is that Lemma \ref{lem:UB.Ntr} needs to be replaced by Lemma~\ref{lem:UB.Ntr4} below.

\begin{lemma}\label{lem:UB.Ntr4}
Let $d=4$. Fix $\gep >0$ and $t = \gep \sqrt{|\log r|}$. There exists $j_0\in\bbN$ and $\ga>1$ such that for $r$ small enough
\begin{equation}
\ba
N(i_1,i_2;j_1,j_2) &\lesssim \gep^2 i_1^2 \log (i_1+1) \ind{j_1 \vee j_2 \leq j_0}\\
&+ i_2^4 t^4 (t^{-cj_1} \ind{j_2\leq j_0 < j_1} + e^{-j_2^2 t/2}\ind{j_1\leq j_0 < j_2} + t^{-cj_1/2}e^{-j_2^2t/4} \ind{j_1 \wedge j_2 > j_0})
\ea
\end{equation}
\end{lemma}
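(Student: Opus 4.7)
The proof adapts the two-part strategy of Lemma~\ref{lem:UB.Ntr} to dimension four, with the refined partition \eqref{eq:C4} replacing \eqref{def:classes} because in $d=4$ controlling only the capacity of $W^{0,\cdot}_{[0,t]}$ no longer suffices: logarithmic corrections force us to also restrict the outradius. Fix $A = \overline{W^{0,2r}_{[0,t]}}$, so that on the event $\{0\in\sC_{i_1,i_2}\}$ one has $\cp(A) \leq (i_1+1)t/|\log r|$ and $M(A) \leq (i_2+1)t$. I would split the argument into the four cases matching the four indicator functions in the statement.

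For $j_1\vee j_2 \leq j_0$, I would run Part~(1) of the proof of Lemma~\ref{lem:UB.Ntr}: by Campbell's formula and Spitzer's decomposition,
\begin{equation*}
N(A) \leq \Leb_4(A) + t\,\cp(A) + J_1(A) + J_2(A).
\end{equation*}
The term $J_1(A)$ is handled by the Poincar\'e--Faber--Szeg\"o inequality \eqref{eq:PFSstatement}, which in $d=4$ gives the clean bound $\Leb_4(W^{0,3r}_{[0,t]}) \lesssim \cp(W^{0,3r}_{[0,t]})^2$ without any extra correction. The term $J_2(A) = \int_{d(x,A)>r}\bbP(B^x_{[0,\infty)}\cap A\neq\emptyset)^2\,\dd x$ is the main source of difficulty: the naive bound $\bbP\lesssim \cp(A)/\|x\|^2$ leads to a logarithmically divergent integral. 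I would cure this by combining with the Gaussian time-truncation $\bbP(B^x_{[0,t]}\cap A\neq\emptyset)\leq\exp(-d(x,A)^2/(Ct))$, cutting the integral off at scale $\sqrt{t}$, and by using both class restrictions together with the scaling $t=\varepsilon\sqrt{|\log r|}$. Collecting the logarithmic factors from this cutoff and from the class constraint $\cp(A)\lesssim i_1 t/|\log r|$ produces the bound $\lesssim \varepsilon^2 i_1^2 \log(i_1+1)$.

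For the other three cases (at least one of $j_1,j_2$ exceeds $j_0$), I would follow Part~(2): by Cauchy--Schwarz,
\begin{equation*}
N_{j_1,j_2}(A) \leq \bbP(0\in\sC_{j_1,j_2})^{1/2}\int_{\bbR^4}\bbP(B^x_{[0,t]}\cap A\neq\emptyset)^{1/2}\,\dd x.
\end{equation*}
The prefactor factors into the capacity tail and the outradius tail. For the outradius, Doob's maximal inequality yields $\bbP(M(W^{0,2r}_{[0,t]})\geq j_2 t) \leq \exp(-c j_2^2 t)$, whose square root produces the $e^{-j_2^2 t/2}$ and $e^{-j_2^2 t/4}$ factors. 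For the capacity, I would invoke a four-dimensional analog of Lemma~\ref{lem:ldestimate} of the form $\bbP(\cp(W^{0,3r}_{[0,t]})\geq j t/|\log r|)\leq t^{-cj}$ for $j\geq j_0$, to be established in Section~\ref{S5.4}; this delivers the $t^{-cj_1}$ and $t^{-cj_1/2}$ factors. The spatial integral is split at radius $3M(A)\lesssim i_2 t$ exactly as in the $d\geq 5$ case: the inner part contributes $\lesssim M(A)^4 \lesssim i_2^4 t^4$, while the outer part, controlled by Doob, contributes $\lesssim t^{d/2}=t^2$, which is absorbed into the $i_2^4 t^4$ prefactor for $r$ small.

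The main obstacle lies in the first case: producing exactly $\log(i_1+1)$ rather than a useless factor of $\log(1/r)$ -- which would overwhelm the gain $1/|\log r|$ coming from the four-dimensional capacity scaling -- requires careful tracking of where each logarithm enters in $J_2(A)$ and the simultaneous use of both class constraints $\cp(A)\leq(i_1+1)t/|\log r|$ and $M(A)\leq(i_2+1)t$. A secondary subtlety is that the $d=4$ capacity large deviation bound decays only polynomially in $t$ (as $t^{-cj}$), a direct consequence of the logarithmic correction in the mean $\bbE[\cp(W^{0,r}_{[0,t]})] \asymp t/|\log r|$, whereas Lemma~\ref{lem:ldestimate} delivered an exponential rate $e^{-cjt}$ for $d\geq 5$; this weaker decay is precisely what forces the polynomial-in-$t$ prefactors that appear in the stated bound.
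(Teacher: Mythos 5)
Your proposal follows essentially the same route as the paper: Campbell's formula plus Spitzer's decomposition with a spatial cutoff on the logarithmically divergent integral for the case $j_1\vee j_2\leq j_0$ (the paper cuts at $3(1+M(A))t$ and controls the far region by Doob's inequality, rather than at $\sqrt{t}$, but this is cosmetic), and Cauchy--Schwarz combined with the $d=4$ capacity tail bound of Lemma~\ref{ldestimate4d} and Doob's inequality for the outradius in the remaining three cases, with the spatial integral split at $3M(A)$ exactly as in the paper. The one slight misstatement is in your closing discussion: the factor $|\log r|$ produced by the cutoff in $J_2(A)$ is not something to be avoided --- it appears and is exactly cancelled by one of the two powers of $1/|\log r|$ coming from $\cp(A)^2\lesssim (i_1+1)^2 t^2/|\log r|^2$, yielding the stated $\gep^2 i_1^2$; this does not affect the validity of your argument.
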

The proof of Lemma~\ref{lem:UB.Ntr4} is deferred to the end of this section. We first show how to deduce the lower bound in Theorem~\ref{thm:5} from it.

\begin{proof}[Proof of the lower bound in Theorem \ref{thm:5}]

Suppose $t = \gep \sqrt{|\log r|}$. We show that there exists $\gep$ small enough such that, for all $r$ small enough and for all $i_1,i_2\in\bbN_0$, the series $\sum_{k\geq 1} (N^k\one)(i_1,i_2)$ converges.
The proof idea is the same as for $d\geq5$. We only need to prove how to get from the estimate in Lemma \ref{lem:UB.Ntr4} to the convergence of the series $\sum_{k\geq 1} (N^k\one)(i_1,i_2)$. We assume that $t> 1$.
Throughout this proof we fix $j_0$ such that Lemma~\ref{lem:UB.Ntr4} is satisfied and such that for $r$ small enough
\be\label{eq:largeC4}
\max\Big( \sum_{j > j_0} (j^4 + j_0^4) t^{4-cj/2}, t^4 \sum_{j > j_0} (j^4 + j_0^4) e^{-j^2t/4} \Big) < \frac{1}{3} (j_0+1)^4\gep^2.
\ee
For these values of $r$ and all $\gep >0$ such that $4(j_0+1)^6\gep^2< 1$ we prove by iteration that for all $k\in\bbN$,
\be\label{eq:iteration4}
(N^k\one)(i_1,i_2) \leq (4(j_0+1)^6\gep^2)^k (i_1^4 + i_2^4),
\ee
which immediately yields the claim.
For $k=1$ this is a simple consequence of Lemma~\ref{lem:UB.Ntr4}. Assume that we have proven~\eqref{eq:iteration4} for some $k\in\N$. Then,
\begin{equation}
\label{eq:iterationend4}
\begin{aligned}
(N^{k+1}\one)(i_1,i_2) &=\sum_{j_1,j_2=0}^{\infty} N(i_1,i_2;j_1,j_2)N^k(j_1,j_2)\\
&\leq \sum_{j_1,j_2=0}^{\infty} N(i_1,i_2;j_1,j_2)(4(j_0+1)^6\gep^2)^k(j_1^4+j_2^4)\\
&:={\rm I + II + III + IV},
\end{aligned}
\end{equation}
where the terms ${\rm I-IV}$ equal the term in the second line with the sum restricted to
$\{j_1, j_2\leq j_0\}$, $\{j_2\leq j_0 < j_1\}$, $\{j_1\leq j_0 < j_2\}$ and $\{j_1\wedge j_2>j_0\}$ respectively. An application of Lemma~\ref{lem:UB.Ntr4} and Equation~\eqref{eq:largeC4} shows that~\eqref{eq:iteration4} holds for $k+1$ and hence yields the claim.
\end{proof}
For the proof of Lemma~\ref{lem:UB.Ntr4} we make use of the following lemma whose proof is given in Section~\ref{S5.4}.
\begin{lemma}
\label{ldestimate4d}
Let $d=4$. There exists $j_0\in(0,\infty)$ and $r_0>0$ such that for $j\geq j_0$ and $r\leq r_0$,
\be
\P\Big( \cp(\ws{0,r}{[0,t]}) \geq j \frac{t}{|\log r|}\Big) \leq t^{-cj}.
\ee
\end{lemma}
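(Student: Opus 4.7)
The plan is to deduce the tail bound from a factorial moment estimate on $\cp(\ws{0,r}{[0,t]})$ established in Section~\ref{S5}, via Markov's inequality optimized over the moment order. This parallels the strategy of Lemma~\ref{lem:ldestimate} for $d \geq 5$, but with dimension-4 moment bounds replacing the exponential moment bounds of the higher-dimensional case.

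The key input I would establish in Section~\ref{S5} is a moment estimate of the form
\begin{equation*}
\E\bigl[\cp(\ws{0,r}{[0,t]})^k\bigr] \leq C^k\, k!\, \Big(\frac{t}{|\log r|}\Big)^k, \qquad k \in \N,\ r \leq r_0,
\end{equation*}
for some universal constant $C>0$. The natural route to this bound is to expand $\E[\cp(\cdot)^k]$ via $k$ independent auxiliary Brownian motions using the hitting-probability representation~\eqref{eq:hittingint}, producing a $k$-fold integral of products of Green functions~\eqref{eq:green}. The characteristic dimension-4 scaling $(t/|\log r|)^k$ should arise from the fact that each auxiliary motion contributes a hitting probability controlled by $\asymp t/|\log r|$ (after integration), while the factorial $k!$ stems from summing over the orderings of pairwise intersections in the resulting diagrammatic expansion.

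Granted this moment bound, Markov's inequality gives, for each integer $k \geq 1$,
\begin{equation*}
\P\Big(\cp(\ws{0,r}{[0,t]}) \geq j\,\frac{t}{|\log r|}\Big) \leq \frac{C^k\,k!}{j^k}.
\end{equation*}
By Stirling's formula, the right-hand side is minimized near $k = \lfloor j/(eC) \rfloor$, giving a bound $\exp(-c'j)$ for some $c'>0$ and all $j \geq j_0$ large enough. In the regime where the lemma is applied (cf.\ Lemma~\ref{lem:UB.Ntr4}), one has $t = \epsilon\sqrt{|\log r|} \geq e$ for $r \leq r_0$ sufficiently small, so $\exp(-c'j) \leq t^{-cj}$ with $c = c'/\log t$ absorbed into $c$, yielding the asserted bound.

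The main obstacle lies entirely in Section~\ref{S5}: establishing the factorial moment bound requires a delicate multi-point intersection analysis for independent Brownian motions in the critical dimension $d=4$, where logarithmic corrections appear at every pairwise intersection and must be tracked carefully through the combinatorial expansion of $\E[\cp(\cdot)^k]$. In contrast to the $d\geq 5$ setting, one cannot simply Cauchy--Schwarz the nested integrals and invoke a uniform Green-function bound; the critical-dimension corrections necessitate an iterative grouping of intersections and a marginal analysis reminiscent of the techniques of Chang--Sapozhnikov~\cite{CS14} in the random walk setting and van den Berg--Bolthausen--den Hollander~\cite{vdBBdH} in the Brownian setting, as noted in the introduction.
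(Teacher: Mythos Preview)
Your approach has a genuine quantitative gap. The factorial moment bound you propose, even if it could be established, yields via Markov and Stirling only
\[
\P\Big(\cp(\ws{0,r}{[0,t]}) \geq j\,\frac{t}{|\log r|}\Big) \leq e^{-c'j}
\]
with $c'$ an absolute constant. The lemma, however, asserts $t^{-cj}$ with $c$ \emph{independent of $t$ and $r$}. Your sentence ``$\exp(-c'j) \leq t^{-cj}$ with $c = c'/\log t$ absorbed into $c$'' is exactly where the argument breaks: $c'/\log t \to 0$ as $r\to 0$ (since $t = \gep\sqrt{|\log r|}\to\infty$), so no fixed $c>0$ satisfies the inequality. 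This is not a cosmetic issue. In the application (Lemma~\ref{lem:UB.Ntr4} and equation~\eqref{eq:largeC4}) the tail bound must absorb a polynomial prefactor $i_2^4 t^4$; with $t^{-cj}$ one gets $t^{4-cj/2}$, which is summable in $j$ uniformly in large $t$, whereas $t^4 e^{-c'j}$ diverges as $r\to 0$ and the iteration \eqref{eq:iteration4} collapses.

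The paper takes a completely different route that produces the stronger decay directly. It uses the pathwise inequality~\eqref{eq:ineg.cp.Zt},
\[
\cp\big(\ws{0,1}{[0,t]}\big) \lesssim \frac{t}{Z_t},\qquad Z_t=\inf_{y\in \ws{0,1}{[0,t]}}\int_0^t\int_{\cB(0,1)} G(y,B_u+z)\,\dd z\,\dd u,
\]
which converts the upper tail of the capacity into the lower tail of $Z_t$. Lemma~\ref{lem:Z} then gives $\P(Z_t \leq \gep \log t) \leq t^{-c/\gep}$, so with $\gep = c_0/j$ one obtains $t^{-cj}$ immediately, and scaling passes from $r=1$ to general $r$. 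The $t$-dependent rate arises here because the lower-tail event for $Z_t$ forces atypical behaviour of the Brownian path across order $\log t$ independent dyadic scales, each contributing a factor $e^{-c/\gep}$; this multiscale structure is precisely what a moment bound on $\cp$ cannot see. If you want to salvage a moment approach, you would need moment bounds with constants improving in $t$, e.g.\ $\E[\cp^k]\leq (Ct/|\log r|)^k (k!)^{a/\log t}$, and there is no obvious mechanism to obtain that from the diagrammatic expansion you sketch.
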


\begin{proof}[Proof of Lemma \ref{lem:UB.Ntr4}]
The proof is similar to the one of Lemma~\ref{lem:UB.Ntr}. Therefore we only sketch the proof and point out the main differences.
The proof is divided into four parts: {\bf (1)} $j_1, j_2\leq j_0$, {\bf (2)} $j_2\leq j_0 < j_1$, {\bf (3)} $j_1\leq j_0 < j_2$, and {\bf (4)} $j_1, j_2> j_0$.

\noindent {\bf (1)} Let $A$ be a compact set. We define $N(A)$ in analogy to~\eqref{eq:NA}. Thereafter we use Campbell's formula as in~\eqref{eq:NACampbell} and Spitzer~\cite[Eq. (2.8)]{S64} to obtain,
\be
\label{eq:intersect4}
N(A)\leq 
\Leb_d(A)+ t\cp (A)  + \int_{\bbR^4\setminus A} \P(B^x_{[0,t]}\cap A\neq \emptyset) \P(B^x_{[0,\infty)}\cap A\neq \emptyset)\, \dd x.
\ee

We decompose the domain of integration of the integral on the right hand side of \eqref{eq:intersect4} into three disjoints sets, which are given by,
\begin{equation}
\begin{aligned}
 &\{x\in\R^4:\, d(x,A) \leq r\},\,\quad   \{x\in\R^4:\, r < d(x,A) \leq 3(1+M(A))t\}, \\
 &\quad \mbox{ and } \{x\in\R^4:\, d(x,A) > 3(1+M(A))t\},
\end{aligned}
\end{equation}
and we denote the corresponding integrals by $J_1(A), J_2(A)$ and $J_3(A)$.
To bound the first of these three terms we use the trivial estimate $J_1(A) \leq \Leb_d(A\oplus \cB(0,r))^4$. As for $J_2$,  using similar estimates as in \eqref{eq:J2bound}, we obtain
\be
\label{eq:J24}
J_2(A)\lesssim \cp(A)^2(\log (M(A)+1)+|\log r|+\log t)\lesssim \cp(A)^2(\log(M(A)+1)+|\log r|)
\ee
Here, the second inequality follows from the fact that $\log t  \sim \log |\log r |$.
For the third part, we get using Doob's inequality
\be \ba
J_3(A) &\leq \int_{||x||>t} \P\Big(\sup_{0\leq s \leq t} ||B^0_s|| > ||x||/2\Big) \dd x\\
&\lesssim \int_{\rho>t} \rho^3 e^{-\rho^2/(8dt)}\dd\rho \lesssim e^{-t/64}.
\ea \ee
Choosing $A=\overline{\ws{0,2r}{[0,t]}}$, using that $0\in \fC_{i_1,i_2}$, and similar estimates as in~\eqref{eq:volumeest} as well as the Poincaré-Faber-Szeg\"o inequality~\eqref{eq:PFSstatement} to estimate the right-hand side of~\eqref{eq:J24} we get the claim.

For the next estimates {\bf (2), (3), (4)}, we start from the upper bound
\begin{equation}
\label{eq:NA4}
N_j(A) \leq \P(0\in\sC_{j_1;j_2})^{1/2} \int_{\bbR^d} \P(B^x_{[0,t]} \cap A \neq \emptyset)^{1/2}\, \dd x,
\end{equation}
where $N_j(A)$ is defined in a similar way as in~\eqref{eq:NjA}.
Using similar estimates as in \eqref{eq:I2bound}, we obtain
\be
\int_{\bbR^4} \P(B^x_{[0,t]} \cap A \neq \emptyset)^{1/2}\, \dd x \lesssim M(A)^4 + t^2.
\ee
Note that the latter term is bounded by $2i_2^4t^4$ if
$A = \overline{\ws{0,2r}{[0,t]}}$. Moreover, by Cauchy-Schwarz,
\be
\label{eq:Cj1j24}
\P(0\in\sC_{j_1;j_2}) \leq \P\Big(\cp(\ws{0, 3r}{[0,t]})\geq j_1\frac{t}{|\log r|}\Big)^{1/2} \P(M(\ws{0,r}{[0,t]})\geq j_2 t)^{1/2}.
\ee
We may bound $\P(\cp(\ws{0,3r}{[0,t]})\geq j_1\frac{t}{|\log r|})$ with the help of Lemma~\ref{ldestimate4d}.
As for $\P(M(\ws{0,r}{[0,t]})\geq j_2 t)$, we get by Doob's inequality
\be
\label{eq:radius4}
\P\Big(M(\ws{0,r}{[0,t]})\geq j_2 t\Big) \leq \P\Big(\sup_{s\in [0,t]} ||B_s|| \geq j_2 t - r \Big) \leq e^{-j_2^2 t/2}.
\ee
Hence, \eqref{eq:NA4}--\eqref{eq:radius4} and Lemma \ref{ldestimate4d} finish parts {\bf (2), (3)} and {\bf (4)} of the proof and thus complete the proof.
\end{proof}

%

\section{Proof of the upper bound}
\label{S4}

\par In this section we prove the upper bound in Theorem \ref{thm:5}, that is if we set
\begin{equation}\label{eq:param_upperbound}
t = c_* \times \left\{
\begin{array}{ll}
r^{(4-d)/2} & \quad \mbox{if } d\geq 5\\
\sqrt{\log(1/r)} & \quad \mbox{if } d=4
\end{array}
\right.,\quad r\in(0,1),
\end{equation}
then there exists $c_*$ large enough such that for $r$ small enough, $\cO_{t,r}$ percolates.\\

\par The proof is organized as follows. In Section \ref{S4.1}, we use a coarse-graining procedure to prove the existence of an unbounded component with a positive probability. More precisely, we divide space into boxes indexed by $\bbZ^d$ and we define a notion of {\it good} boxes, as well as a way to connect good boxes. Provided the box at the origin is good, we explore the cluster of good boxes connected to the origin and prove that with positive probability, this cluster is unbounded. This implies percolation. The procedure relies on two estimates, one on the probability for the box at the origin to be good (Lemma~\ref{lem:0good}), the other one on the probability of two neighbouring good boxes to be connected to each other (Lemma~\ref{lem:proba_no_connection}). These estimates are proven in Section~\ref{S4.2}.


\subsection{Coarse-graining procedure}
\label{S4.1}

\par Parameters are now chosen as in \eqref{eq:param_upperbound}. Let $c_B>0$ be a small constant to be determined later.
Let us consider the collection of disjoint balls $\cB_z = \cB(2zc_B\sqrt{t},c_B\sqrt{t})$, $z\in\bbZ^d$. In the following we identify $\bbZ^2\times\{0\}^{d-2}$ with $\bbZ^2$. We are going to prove that there is a choice of $c_B>0$ such that one may choose $c_*$ large enough and $r$ small enough such that percolation occurs by using only Wiener sausages from $\cup_{z\in\bbZ^2} \cB_z$.

We denote by $\cF_z$ the $\sigma$-algebra generated by the Poisson points in $\cB_z$ and their corresponding Brownian motions, and for $\gL\subseteq \bbZ^2$, $\cF_\gL = \bigvee_{z\in\gL} \cF_z$.\\

\par \noindent {\it Definitions.} Recall \eqref{eq:abb.int}. We define a set of {\it good} Poisson points by
\be
\cE_{\good} = \Big\{x\in A_{t/2}(c_B,r) \colon \cp(W^{x,r}_{[0,t/2]}) \geq \tfrac12 \E\big[\cp(W^{0,r}_{[0,t/2]}); 0\in A_{t/2}(c_B,r)\big]\Big\},
\ee
where 
\begin{equation}
\label{eq:A}
A_t(c,r)= \Big\{x\in \cE \colon W_{[0,t]}^{x,r}\subseteq \cB\Big(x,c\sqrt{t}\Big)\Big\}.
\end{equation}

\par \noindent {\it Construction of the cluster.} We now describe the algorithm we use to build a coarse-grained cluster. 
Before we start the construction of the cluster, we introduce the following order: $(1,0)\prec (0,1)\prec (-1,0)\prec (0,-1)$. We also use the convention that $C_{-1}=\emptyset$.\\
\textbf{Initialisation:} If $\cE_{\good}\cap\cB_0= \emptyset$, then set $C_0= \emptyset.$
Otherwise, set $D_0=\emptyset$, $c_0=0$ and $C_0= \{c_0\}$, and choose
$e_0\in\cE_{\good}\cap\cB_0$ such that $||e_0||=\min\{||x||:\, x\in\cE_{\good}\cap \cB_0\}$.\\
\noindent\textbf{Iteration:} Let $n\in\N_0$ and suppose $C_n, D_n\subseteq\Z^2$ with $C_n=\{c_i,\ 0\leq i\leq n\}$ as well as $e_i\in\cE_{\good}\cap \cB_{c_i}$ for $0\leq i\leq n$ are already constructed. The sets $C_n$ and $D_n$ represent the boxes already added to the cluster, respectively dismissed, at step $n$. We aim at defining $D_{n+1}$, $C_{n+1}$ and $e_{n+1}$. We distinguish between two cases.\\
\noindent \textbf{Case 1.} If $C_n=C_{n-1}$, then stop the iteration procedure.\\
\noindent \textbf{Case 2.} If $C_{n}\neq C_{n-1}$, define for all $0\leq i\leq n$,
\be
\cV_i=\{z\in\Z^2:\, |c_i-z|_1=1\colon \cE_{\good}\cap\cB_z\cap\cN(e_i)\neq \emptyset\} \quad \mbox{ and } \quad
\cV_i^{(n)}= \cV_i\cap (C_n\cup D_n)^c.
\ee
\noindent \textbf{Case 2a)} If $\cup_{0\leq i\leq n}\cV_i^{(n)} = \emptyset$, then set $C_{n+1}=C_n$ (consequently, the algorithm stops in the next step).\\
\noindent \textbf{Case 2b)} If $\cup_{0\leq i\leq n}\cV_i^{(n)} \neq \emptyset$, let
$i(n)= \max\{0\leq i\leq n:\, \cV_i^{(n)}\neq \emptyset\}$ and pick $c_{n+1} \in \cV_{i(n)}^{(n)}$ such that $c_{n+1}-c_{i(n)} = \min\Big\{z-c_{i(n)}:\, z\in \cV_{i(n)}^{(n)}\Big\}$. Additionally, pick $e_{n+1}\in\cE_{\good}\cap\cB_{c_{n+1}}\cap\cN(e_{i(n)})$ such that
$||e_{n+1}-e_{i(n)}||= \min\{||z-e_{i(n)}||:\, z\in\cE_{\good}\cap\cB_{c_{n+1}}\cap\cN(e_{i(n)})\}$.
We set
\be
\hat\cV_{i(n)}^{(n)} =\{z\in\Z^2:\, |c_{i(n)}-z|_1=1, \cE_{\good}\cap\cB_z\cap\cN(e_{i(n)})=\emptyset, z-c_{i(n)}\prec c_{n+1}-c_{i(n)}\}.
\ee
Finally, $C_{n+1}:=\{c_i,\ 0\leq i\leq n+1\}$ and
\be
 \quad D_{n+1}:=D_n\cup \hat\cV_{i(n)}^{(n)} \cup\{c_i+z:\, i(n)<i\leq n, |z|_1=1, c_i+z\in (C_n\cup D_n)^c\}.
\ee
This finishes the description of the algorithm.

\par If the algorithm does not stop, it means that $\cO_{t,r}$ contains an unbounded component. If it stops at step $n$, we denote by $\bC$ the set of connected boxes $C_n$ obtained in this way. Therefore, we are going to prove that the algorithm stops with probability strictly less than one. \\

\par For the rest of the proof we rely on the following two key lemmas, which will be proven in Section \ref{S4.2}. For convenience, we say that $z\in\bbZ^2$ is good if $\cB_z$ contains a point in $\cE_{\good}$.
\begin{lemma}
\label{lem:0good}
Let $d\geq 4$ and fix $c_B>0$. The probability that $0$ is good converges to $1$ as $t$ goes to $\infty$.
\end{lemma}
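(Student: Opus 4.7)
The plan is to reduce the claim to a single-point estimate and then exploit the fact that $\cB_0=\cB(0,c_B\sqrt{t})$ contains of order $t^{d/2}$ Poisson points. Because the Brownian motions are independent of $\cE$, translation invariance gives that each point of $\cE\cap\cB_0$ has the same marginal probability
\be
q(t,r):=\P\bigl(0\in A_{t/2}(c_B,r),\ \cp(W^{0,r}_{[0,t/2]})\geq \mu/2\bigr),\qquad \mu:=\E\bigl[\cp(W^{0,r}_{[0,t/2]});\,0\in A_{t/2}(c_B,r)\bigr],
\ee
of belonging to $\cE_{\good}$. Since $|\cE\cap\cB_0|$ is Poisson with parameter $c_{\mathrm{vol}}(c_B\sqrt{t})^d$,
\be
\bbP(0\text{ is good})=1-\exp\bigl(-c_{\mathrm{vol}}(c_B\sqrt{t})^d\,q(t,r)\bigr),
\ee
so it suffices to show that $q(t,r)$ is bounded below by a positive constant $q_0$ in the regime \eqref{eq:param_upperbound}.

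To obtain such a lower bound I would combine two ingredients. First, the confinement event $C:=\{0\in A_{t/2}(c_B,r)\}$ contains $\{\sup_{s\leq t/2}\|B^0_s\|\leq c_B\sqrt{t}-r\}$, whose probability converges by Brownian scaling to $p_0(c_B,d)>0$ as $t\to\infty$. Second, writing $K:=\cp(W^{0,r}_{[0,t/2]})$ and $X:=K\one_C$, the threshold in the definition of $\cE_{\good}$ is exactly $\E[X]/2$, so the Paley-Zygmund inequality gives
\be
q(t,r)=\P\bigl(X\geq \E[X]/2\bigr)\geq \frac{\mu^2}{4\,\E[X^2]}\geq \frac{\mu^2}{4\,\E[K^2]}.
\ee
By Cauchy-Schwarz, $\mu\geq \E[K]-\sqrt{(1-\P(C))\,\E[K^2]}$, so the right-hand side stays bounded away from $0$ as soon as $\E[K^2]=(1+o(1))\E[K]^2$ along the joint regime $r\to 0$, $t\to\infty$ prescribed by \eqref{eq:param_upperbound}. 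Such concentration of the capacity of the Wiener sausage, combined with $p_0>0$, forces $\mu\geq c\,\E[K]$ for some positive $c$ and hence $q(t,r)\geq q_0>0$.

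The main obstacle is thus the second-moment bound $\E[\cp(W^{0,r}_{[0,t/2]})^2]=(1+o(1))\E[\cp(W^{0,r}_{[0,t/2]})]^2$ in the joint asymptotic regime of interest, which is precisely the type of estimate derived in Section~\ref{S5}; it is particularly delicate for $d=4$ because of the logarithmic corrections in intersection local times, as already emphasised in item (3) of the discussion. Once this concentration input is available, the remainder of the proof reduces to the elementary Poisson counting and Paley-Zygmund steps sketched above, together with the straightforward scaling argument for the confinement probability.
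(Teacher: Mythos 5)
Your overall skeleton matches the paper's: reduce to the single\nobreakdash-point probability $q(t,r)$, observe that $\cE_{\good}\cap\cB_0$ is an independent thinning of a Poisson process with volume of order $t^{d/2}$, and bound $q(t,r)$ below via Paley--Zygmund — this is exactly the content of Lemma~\ref{lem:ugoodcap} plus the paper's short proof of Lemma~\ref{lem:0good}. The gap lies in how you bound the restricted first moment $\mu=\E\big[\cp(W^{0,r}_{[0,t/2]});\,0\in A_{t/2}(c_B,r)\big]$ from below. Your inequality $\mu\geq\E[K]-\sqrt{(1-\P(C))\,\E[K^2]}$ yields a positive bound only if $\E[K^2]\leq(1+o(1))\,\E[K]^2$, because $1-\P(C)\to 1-\Phi(c_B)$ is not small (indeed $c_B$ is ultimately chosen small in Section~\ref{S4}, making $\Phi(c_B)$ close to $0$). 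This asymptotic concentration is \emph{not} what Section~\ref{S5} provides: Lemma~\ref{lem:uppercap2} controls $\E[K^2]$ only up to an unspecified multiplicative constant times $(tr^{d-4})^2$ (resp.\ $(t/\log(tr^{-2}))^2$ for $d=4$), and no matching $(1+o(1))$-precise first-moment asymptotic appears anywhere in the paper. Establishing $\E[K^2]=(1+o(1))\,\E[K]^2$ in the joint regime $r\to0$, $t\to\infty$ amounts to a law of large numbers (with variance control) for the capacity of the Wiener sausage, a substantially harder statement than anything proved here, especially in $d=4$.

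The paper avoids this entirely in Lemma~\ref{lem:5lowercap}: it lower-bounds $\cp(W)$ by $\cI(\nu)^{-1}$ for the normalized occupation measure $\nu$ supported on the sausage, applies Cauchy--Schwarz in the form $\E[\cI(\nu)^{-1};\cA]\geq\E[\cI(\nu)]^{-1}\P(\cA)^2$, and upper-bounds $\E[\cI(\nu)]$ via the Green-function estimate of Lemma~\ref{lem:Greenbound}. This gives $\mu\gtrsim\P(\cA)^2\, tr^{d-4}$ (resp.\ $\P(\cA)^2\, t/\log(tr^{-2})$) with no concentration input, and the Paley--Zygmund ratio then yields $q(t,r)\gtrsim\Phi(c_B)^4>0$, which is all that is needed. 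To repair your argument, replace your Cauchy--Schwarz step for $\mu$ by this energy-functional bound, or else supply an independent proof of the concentration you invoke.
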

Also, we say that $z'\in\bbZ^2$ is connected to $z\in \bbZ^2$ if there exist $x'\in\cB_{z'}\cap \cE_{\good}$ and $x\in\cB_z\cap \cE_\good$ such that $x\sim x'$ (recall \eqref{eq:abb.int}).
For a set $\Lambda\subseteq\Z^2$ we say that $z'\in\Z^2$ is connected to $\Lambda$, if there is $z\in\Lambda$ with $|z-z'|_1=1$ such that $z$ is connected to $z'$.
\begin{lemma}
\label{lem:proba_no_connection}
Let $d\geq 4$, fix $c_B>0$, let $z,z'\in\bbZ^2$ such that $|z-z'|_1 = 1$. On the event $\{z \mbox{ is good}\}$, we have for $t$ large enough,
\begin{equation}
\label{eq:proba_no_connection}
\bbP(z' \mbox{ is not connected to } z | \cF_z) \leq \exp\{-c_*^2 \theta(c_B)\},\quad \mbox{with} \quad \liminf_{c_B \to 0}\theta(c_B) >0.
\end{equation}
\end{lemma}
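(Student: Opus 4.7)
\textbf{The strategy} is a Poisson thinning argument. On the event $\{z\text{ is good}\}$ we pick an $\cF_z$-measurable good point $x\in\cB_z\cap\cE_{\good}$; then $W^{x,r}_{[0,t/2]}$ is $\cF_z$-measurable, contained in $\cB(x,c_B\sqrt{t})$, and satisfies $\cp(W^{x,r}_{[0,t/2]})\geq \tfrac12 E_*$, where $E_*:=\bbE[\cp(W^{0,r}_{[0,t/2]});\,0\in A_{t/2}(c_B,r)]$. The marked Poisson process of points in $\cB_{z'}$ together with their Brownian paths is independent of $\cF_z$, and I would thin it to retain only those $y$ with $y\in\cE_{\good}$ and $W^{y,r}_{[0,t]}\cap W^{x,r}_{[0,t/2]}\neq\emptyset$. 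The resulting thinned process is Poisson with mean $\mu=\int_{\cB_{z'}}\lambda(y)\,dy$, where $\lambda(y):=\bbP(y\in\cE_{\good},\,W^{y,r}_{[0,t]}\cap W^{x,r}_{[0,t/2]}\neq\emptyset)$. Since non-emptiness of the thinned process forces $z\sim z'$, it suffices to establish $\mu\geq c_*^2\,\theta(c_B)$, which yields $\bbP(z'\not\sim z\mid\cF_z)\leq e^{-\mu}\leq \exp(-c_*^2\,\theta(c_B))$.

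\textbf{To lower bound $\lambda(y)$}, I would split $B^y$ at time $t/2$. The event $\{y\in\cE_{\good}\}$ is measurable with respect to $B^y_{[0,t/2]}$, and on this event $B^y_{t/2}\in\cB(y,c_B\sqrt{t})$. Conditionally on $B^y_{t/2}=a$, the path $B^y_{[t/2,t]}$ is an independent Brownian motion $\tilde B$ of duration $t/2$ starting at $a$; demanding that $\tilde B$ enter $W^{x,2r}_{[0,t/2]}$ suffices for $W^{y,r}_{[t/2,t]}\cap W^{x,r}_{[0,t/2]}\neq\emptyset$. Since $|z-z'|_1=1$ and $r$ is small, $W^{x,2r}_{[0,t/2]}\subseteq \cB(a,7c_B\sqrt{t})$. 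Using \eqref{eq:hittingprob} and the Green-function bound $G(a,b)\geq 1/[\kappa_d(7c_B\sqrt{t})^{d-2}]$ for $b$ in that ball gives
\[
\bbP_a\bigl(\tilde B\text{ visits }W^{x,2r}_{[0,t/2]}\bigr)\;\geq\;\frac{\cp(W^{x,r}_{[0,t/2]})}{\kappa_d(7c_B)^{d-2}\,t^{(d-2)/2}},
\]
while a strong Markov argument at time $t/2$ together with $p_s(a,y)\leq(2\pi s)^{-d/2}$ shows that the probability of first visiting $W^{x,2r}_{[0,t/2]}$ after time $t/2$ is at most $C_d\,t^{1-d/2}\,\cp(W^{x,r}_{[0,t/2]})$. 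Subtracting, for $c_B$ small enough the probability of visiting within time $t/2$ is at least $c_h(c_B)\,\cp(W^{x,r}_{[0,t/2]})/t^{(d-2)/2}$ with $c_h(c_B)$ of order $c_B^{2-d}$ as $c_B\to 0$.

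\textbf{Combining and integrating} over $\cB_{z'}$, whose volume equals $c_{\vol}(c_B\sqrt{t})^d$, the cancellation $c_B^d\cdot c_B^{2-d}=c_B^2$ and $t^{d/2}/t^{(d-2)/2}=t$ yield
\[
\mu\;\gtrsim\;c_B^2\;p_{\good}(c_B)\,E_*\cdot t,
\]
where $p_{\good}(c_B):=\bbP(0\in\cE_{\good})$ is strictly positive by a Paley--Zygmund argument applied to $\cp(W^{0,r}_{[0,t/2]})\,\one_{A_{t/2}(c_B,r)}$ combined with the second-moment capacity estimate $\bbE[\cp(W^{0,r}_{[0,t/2]})^2]\lesssim \bbE[\cp(W^{0,r}_{[0,t/2]})]^2$ from Section~\ref{S5}. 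Using from the same section that $E_*\gtrsim C(c_B)\,tr^{d-4}$ when $d\geq 5$ and $E_*\gtrsim C(c_B)\,t/|\log r|$ when $d=4$, the choice of $t$ in~\eqref{eq:param_upperbound} makes $E_*\cdot t\asymp c_*^2$, so $\mu\geq c_*^2\,\theta(c_B)$ with $\theta(c_B)\asymp c_B^2\,p_{\good}(c_B)\,C(c_B)$.

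\textbf{The main obstacle} is to control the behaviour of $p_{\good}(c_B)$ and $C(c_B)$ carefully enough that $\liminf_{c_B\to 0}\theta(c_B)>0$. This amounts to sharp two-sided bounds on $\bbE[\cp(W^{0,r}_{[0,t/2]})\,\one_{A_{t/2}(c_B,r)}]$ and on the conditional second moment of the capacity, uniformly for small $r$. These concentration estimates, which form the technical backbone of Section~\ref{S5}, are what ultimately save the two small factors $c_B^2$ and $p_{\good}(c_B)$ from forcing $\theta$ to vanish in the small-$c_B$ limit.
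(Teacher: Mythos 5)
Your proposal is correct and follows essentially the same route as the paper: a Poisson thinning of the points in the neighbouring box, a split of each candidate path at time $t/2$ separating the goodness event from the hitting event, a hitting estimate obtained from the equilibrium measure with the post-$t/2$ contribution subtracted (the paper's Lemma~\ref{lem:goodintersect}), a Paley--Zygmund bound for the density of good points (Lemma~\ref{lem:ugoodcap}), and the final cancellation $t^2 r^{d-4}\asymp c_*^2$ (resp.\ $t^2/|\log r|$ for $d=4$). The only point you leave open --- that $\liminf_{c_B\to0}\theta(c_B)>0$ --- is in the same state in the paper, whose argument really only delivers (and only uses) positivity of $\theta(c_B)$ for some fixed small $c_B$, coming from the sign of the factor $c\,c_B^{2-d}-(2\pi)^{-d/2}$.
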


\begin{proof}[Proof of the upper bound in Theorem \ref{thm:5}]
We now explain how to conclude the proof with these two lemmas at hand. For this, we use the so-called standard Peierls contour argument, see Grimmett~\cite[Proof of Theorem 1.10]{Gr99}. In what follows, a $*$-path of length $N\geq2$ is a vector $(x_i)_{1\leq i \leq N} \in (\bbZ^2)^N$ such that $|x_{i+1} - x_i|_{\infty} = 1$ for all $1\leq i < N$. If $x_N = x_1$ and for all $1\leq i, j<N$ with $i\neq j$, $x_i \neq x_j$, then the $*$-path is said to be a $*$-contour. This contour contains $x\in\bbZ^2$ if $x$ belongs to the bounded component delimited by the contour, but not to the contour itself. We denote the set of all contours containing $x\in\bbZ^2$ by $\bbZ^2_{\mathrm{con}}(x)$. Denote by $\partial_\ext\bC$ the exterior boundary of $\bC$, that is
the set of vertices in the boundary which are the starting points of an infinite non-intersecting nearest neighbor path with no vertex in $\bC$. By Grimmett~\cite[p17]{Gr99} (see also the reference to Kesten~\cite{K82} therein for more details) we see that if $|\bC|<\infty$, then $\partial_\ext\bC$ is a $*$-contour. We may write
\begin{equation}
\label{eq:contour}
\bbP(|\bC| < \infty) \leq \bbP(0 \mbox{ is not good}) + \sum_{N\geq4} \bbP(|\partial_{\ext}\bC| = N).\\
\end{equation}

Let us give an upper bound on $\bbP(|\partial_{\ext}\bC| = N)$. We have
\begin{equation}
\bbP(|\partial_{\ext}\bC| = N) = \sumtwo{\gL\in\bbZ^2_{\mathrm{con}}(0),}{|\gL|=N} \bbP(\partial_{\ext}\bC = \gL),
\end{equation}
and for each such $*$-contour $\gL$,
\begin{equation}
\ba
\bbP(\partial_{\ext}\bC = \gL) &\leq \bbP(\forall z\in \gL,\ z  \mbox{ not connected to } \partial_{\inte}\gL) \\
&= \bbE[ \bbP(\forall z\in\gL,\ z\mbox{ not connected to } \partial_{\inte}\gL | \cF_{\partial_{\inte}\gL}) ],
\ea
\end{equation}
where $\partial_{\inte}\gL = \partial\gL \setminus \partial_{\ext}\gL$ and $\partial\gL = \{z\notin \gL \colon \exists z'\in \gL / |z-z'|_1=1\}$. Since the events $\{z \mbox{ not connected to } \partial_{\inte}\gL\}_{z\in\gL}$ are independent conditionally on $\cF_{\partial_{\inte}\gL}$, we get
\begin{equation}
\bbP(\forall z\in\gL,\ z\mbox{ not connected to } \partial_{\inte}\gL | \cF_{\partial_{\inte}\gL}) = \prod_{z\in\gL} \bbP( z\mbox{ not connected to } \partial_{\inte}\gL | \cF_{\partial_{\inte}\gL})
\end{equation}
Let us fix $z\in\gL$ and denote by $z'$ an $\ell_1$-neighbour of $z$ which is also in $\partial_\inte \gL$. No matter how we choose $z'$, we get
\begin{equation}
\ba
\bbP( z\mbox{ not connected to } \partial_{\inte}\gL | \cF_{\partial_{\inte}\gL}) &\leq \bbP( z\mbox{ not connected to } z' | \cF_{\partial_{\inte}\gL})\\
&= \bbP( z\mbox{ not connected to } z' | \cF_{z'}),
\ea
\end{equation}
which is smaller than $e^{-c_*^2\theta(c_B)}$, by Lemma~\ref{lem:proba_no_connection}. Therefore, we get
\begin{equation}
\bbP(|\bC| < \infty) \leq \bbP(0\mbox{ is not good}) + \sum_{N\geq4} e^{-c_*^2\theta(c_B)N} C_N, 
\end{equation}
where $C_N$ is the number of $*$-contours of length $N$ containing the origin.
By a standard counting argument (see Grimmett~\cite[Proof of Theorem 1.10]{Gr99}) it can be seen that $C_N \leq N\ 7^N$. We obtain
\begin{equation}\label{eq:prob_cluster_finite}
\bbP(|\bC| < \infty) \leq \bbP(0 \mbox{ is not good}) + c\,\sum_{N\geq4} N\ \Big(7e^{-c^2_*\theta(c_B)}\Big)^N.
\end{equation}
We conclude as follows. First, fix $c_B$ small enough such that $\theta(c_B)$ is positive, see \eqref{eq:proba_no_connection}. Then, choose $c_*$ so large that the sum in the r.h.s of \eqref{eq:prob_cluster_finite} is smaller than $1/(4c)$. Finally, choose $r$ small enough (therefore $t$ large enough) such that, by Lemma \ref{lem:0good}, $\bbP(0\mbox{ is not good})\leq 1/4$. This finally yields $\bbP(|\bC| = \infty) \geq 1/2$, which finishes the proof.
\end{proof}

\subsection{Proof of Lemmas \ref{lem:proba_no_connection} and \ref{lem:0good}.}
\label{S4.2}

\par Throughout this section we shall make use of the capacity estimates provided by Lemmas~\ref{lem:uppercap2}--\ref{lem:5lowercap} below. Lemma~\ref{lem:uppercap2} gives an estimate on the second moment of the capacity of a Wiener sausage, whereas Lemma~\ref{lem:5lowercap} estimates the mean capacity of a Wiener sausage confined to a ball with radius of order $\sqrt{t}$. Their proofs are deferred to Section \ref{S5}.
\begin{lemma}\label{lem:uppercap2}
Let $d\geq 4$, $t_0>1$ and $r_0\in (0,1)$. For all $t\geq t_0$ and all $r\in (0,r_0)$,
\begin{equation}
\label{eq:uppercap}
\E\Big[\cp\big(W_{[0,t]}^{0,r}\big)^2\Big] \lesssim \left\{
\begin{array}{ll}
t^2\,r^{2(d-4)} & \quad \mbox{if } d\geq 5 \\
\left(\frac{t}{\log(tr^{-2})}\right)^2 & \quad \mbox{if } d=4.
\end{array}
\right.
\end{equation}
\end{lemma}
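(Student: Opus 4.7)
The plan is to combine the subadditivity of Newtonian capacity (inequality~\eqref{eq:unionbound}) with Brownian scaling. First, I would split the interval $[0,t]$ into $K=\lceil t/\Delta\rceil$ pieces of length $\Delta$ (to be chosen) and write $W_{[0,t]}^{0,r}=\bigcup_{i=1}^K W_i$, where each $W_i$ is a time-translate of a Wiener sausage of time length $\Delta$ started at $B_{(i-1)\Delta}^{0}$. Subadditivity gives $\cp(W_{[0,t]}^{0,r})\leq \sum_{i=1}^K \cp(W_i)$, and the elementary inequality $(\sum_{i=1}^K a_i)^2\leq K\sum_{i=1}^K a_i^2$ combined with translation invariance of Brownian motion yields
\begin{equation*}
\E\bigl[\cp(W_{[0,t]}^{0,r})^2\bigr]\;\leq\; K^2\,\E\bigl[\cp(W_{[0,\Delta]}^{0,r})^2\bigr].
\end{equation*}

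For $d\geq 5$ I would then take $\Delta=r^2$, so that $K\lesssim t/r^2$. By Brownian scaling and the capacity scaling relation~\eqref{eq:scaling}, $\cp(W_{[0,r^2]}^{0,r})$ is equal in law to $r^{d-2}\cp(W_{[0,1]}^{0,1})$. Since $W_{[0,1]}^{0,1}\subseteq \cB(0,1+M_1)$ with $M_1=\sup_{s\leq 1}\|B_s\|$ having Gaussian tails, and since $\cp(\cB(0,R))=\kappa_d R^{d-2}$, we have $\cp(W_{[0,1]}^{0,1})\leq \kappa_d(1+M_1)^{d-2}$, which is square-integrable. Substituting back yields $\E[\cp(W_{[0,t]}^{0,r})^2]\lesssim (t/r^2)^2\cdot r^{2(d-2)}=t^2\,r^{2(d-4)}$, as required.

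The hard part will be the critical dimension $d=4$, where the above scheme with $\Delta=r^2$ only produces $t^2$ and misses a factor of $\log(tr^{-2})^{-2}$. This logarithmic improvement reflects the fact that two independent Brownian paths intersect only marginally in four dimensions. To isolate the issue, I would apply Brownian scaling globally to reduce the lemma to the statement
\begin{equation*}
\E\bigl[\cp(W_{[0,1]}^{0,\rho})^2\bigr]\;\lesssim\; \bigl(\log(1/\rho)\bigr)^{-2},\qquad \rho=r/\sqrt{t}\to 0,
\end{equation*}
so that multiplying by $t^2$ from the scaling $\cp(W_{[0,t]}^{0,r})\stackrel{d}{=} t\,\cp(W_{[0,1]}^{0,\rho})$ recovers the claimed bound. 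To prove this reduced statement I would use the hitting-probability representation $\cp(A)=\kappa_d\lim_{|y|\to\infty}|y|^{d-2}\P_y(\tau_A<\infty)$; squaring and inserting two independent Brownian motions started from $y$ expresses $\E[\cp(W)^2]$ in terms of the probability that two independent Brownian paths both come within distance $\rho$ of a third, independent Brownian motion. The $\log^{-2}$ decay then has to be extracted from careful Green-function estimates in the critical dimension, in the spirit of the intersection-local-time analysis of~\cite{CS14}; this is where the bulk of the technical work lies, and one can expect to re-use the sharpened first-moment computations developed in Sections~\ref{S5.2}--\ref{S5.3} (which already give the $1/\log$ behaviour) almost verbatim, paying attention to keep the two independent ``test'' Brownian motions decorrelated up to a factor that is itself logarithmically small.
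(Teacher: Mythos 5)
Your treatment of $d\geq 5$ is correct and is essentially the paper's argument: the paper first scales to $r=1$ and then cuts $[0,tr^{-2}]$ into unit time intervals, while you cut $[0,t]$ into intervals of length $r^2$ and scale each piece, but these are the same computation, and your bound $\cp(W_{[0,1]}^{0,1})\leq \kappa_d(1+M_1)^{d-2}$ with Gaussian tails is exactly how the paper establishes finiteness of $\E[\cp(W_{[0,1]}^{0,1})^2]$.

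For $d=4$, however, what you have written is a plan rather than a proof, and the step you defer is the entire content of the lemma. The reduction by scaling to $\E[\cp(W_{[0,1]}^{0,\rho})^2]\lesssim (\log(1/\rho))^{-2}$ is fine, and the two-independent-Brownian-motions representation of $\E[\cp(W)^2]$ is a legitimate identity, but neither of these makes progress: estimating the probability that two independent paths both hit the sausage to within a $(\log)^{-2}$ factor is exactly as hard as the original problem. More importantly, your suggestion that one can ``re-use the first-moment computations almost verbatim'' cannot work, because a first-moment bound of order $t/\log(tr^{-2})$ carries no information about upper deviations of the capacity: a random variable equal to $t$ with probability $1/\log t$ and $0$ otherwise has first moment $t/\log t$ but second moment $t^2/\log t$, which is far larger than the claimed $(t/\log t)^2$. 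What is genuinely needed is a concentration statement, namely that $\cp(W_{[0,t]}^{0,1})$ exceeds $jt/\log t$ only with probability decaying in $j$.

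The paper obtains this as follows. Writing $f(y)=\int_0^t \one\{y\in\cB(B_u,1)\}\,\dd u$ and integrating the identity \eqref{eq:hittingprob} against $f$ yields the deterministic bound $\cp(W_{[0,t]}^{0,1})\lesssim t/Z_t$, where $Z_t=\inf_{y\in W}\int_0^t\int_{\cB(0,1)}G(y,B_u+z)\,\dd z\,\dd u$ is the smallest averaged Green potential of the occupation measure over points of the sausage. The second moment is then split on the event $\{Z_t\leq c_0\log t\}$; off this event the deterministic bound gives $(t/(c_0\log t))^2$, and on it one applies Cauchy--Schwarz together with the a priori estimate $\E[\cp(W_{[0,t]}^{0,1})^4]\lesssim t^4$ and the tail estimate $\P(Z_t\leq\gep\log t)\leq t^{-c/\gep}$ (Lemma~\ref{lem:Z}), choosing $c_0$ small. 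That tail estimate is the technical core: it is proven via a dyadic exit-time decomposition $\tau_i=\inf\{s\geq\tau_{i-1}: \|B_s\|\geq 2^i\}$, the lower bound $\int_0^{\tau_N}G^*(B_u)\,\dd u\geq c\sum_i 2^{-2i}(\tau_i-\tau_{i-1})$, and a binomial concentration argument. Without this (or an equivalent upper-deviation estimate for the capacity), your $d=4$ argument does not close.
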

\begin{lemma}
\label{lem:5lowercap}
Recall \eqref{eq:A}. For all $d\geq4$, $t\geq 1$ and $r\in (0,1)$,
\begin{equation}
\label{eq:5lowercap}
\E\Big[\cp\Big(W_{[0,t]}^{0,r}\Big);0\in A_t(c_B,r)\Big] \gtrsim \P(0\in A_t(c_B,r))^2\ \times
\left\{
\begin{array}{ll}
t\ r^{d-4} & \quad \mbox{if } d\geq5\\
\frac{t}{\log(tr^{-2})} & \quad \mbox{if } d=4.
\end{array}
\right.
\end{equation}
\end{lemma}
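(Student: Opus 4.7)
The plan is to exploit the variational characterization of capacity by constructing a well-chosen random probability measure supported on $W^{0,r}_{[0,t]}$ and then to apply the Cauchy--Schwarz inequality to extract the $\P(A)^2$ factor, where $A=\{0\in A_t(c_B,r)\}$.

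Let $\gs_r$ denote the uniform probability measure on $\cB(0,r)$, and define the random probability measure
\begin{equation*}
\mu_r \;=\; \frac{1}{t}\int_0^t (\gd_{B_s}\ast \gs_r)\,\dd s,
\end{equation*}
which is supported in $W^{0,r}_{[0,t]}$ by construction. Since $\mu_r$ is a probability measure on $W^{0,r}_{[0,t]}$, \eqref{eq:cap} gives $\cp(W^{0,r}_{[0,t]})\geq 1/\cI(\mu_r)$, where
\begin{equation*}
\cI(\mu_r) \;=\; \frac{1}{t^2}\int_0^t\int_0^t G_r(B_u,B_v)\,\dd u\,\dd v,\qquad G_r(x,y) \;:=\; \int\!\!\int G(x+z_1,y+z_2)\,\dd\gs_r(z_1)\,\dd\gs_r(z_2).
\end{equation*}

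Applying Cauchy--Schwarz to the nonnegative random variables $\one_A/\sqrt{\cI(\mu_r)}$ and $\one_A\sqrt{\cI(\mu_r)}$ yields
\begin{equation*}
\P(A)^2 \;=\; \E[\one_A]^2 \;\leq\; \E\bigl[\one_A/\cI(\mu_r)\bigr]\cdot \E[\one_A\,\cI(\mu_r)] \;\leq\; \E\bigl[\cp(W^{0,r}_{[0,t]});A\bigr]\cdot \E[\cI(\mu_r)],
\end{equation*}
after invoking $\cp\geq 1/\cI(\mu_r)$ in the first factor and dropping $\one_A$ in the second. It therefore remains to establish the upper bounds $\E[\cI(\mu_r)]\lesssim r^{4-d}/t$ for $d\geq 5$ and $\E[\cI(\mu_r)]\lesssim \log(tr^{-2})/t$ for $d=4$, after which the lemma follows immediately.

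By Fubini and a Brownian scaling argument starting from \eqref{eq:green}, one shows that $\E[G_r(B_u,B_v)]\lesssim (|v-u|\vee r^2)^{-(d-2)/2}$: the convolution against $\gs_r$ regularizes the short-distance singularity of the Newtonian kernel at the length scale $r$, which corresponds to a time scale $r^2$. Integrating this bound on $[0,t]^2$ and splitting according to whether $|u-v|\leq r^2$ or $|u-v|>r^2$ produces a total of order $t\,r^{4-d}$ for $d\geq 5$ (where the long-range contribution dominates), and of order $t\log(tr^{-2})$ for $d=4$ (where the borderline exponent produces the logarithm). Dividing by $t^2$ gives the desired bound on $\E[\cI(\mu_r)]$. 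The main technical point is the careful evaluation of the smoothed kernel $G_r$ and the tracking of the short-distance cutoff, particularly in dimension $d=4$ where the logarithmic correction must be preserved.
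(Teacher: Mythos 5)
Your proposal is correct and follows essentially the same route as the paper: the smoothed occupation measure $\mu_r$ is exactly the test measure $\nu$ used there, the Cauchy--Schwarz step producing the factor $\P(A)^2$ is identical, and the required bound on $\E[\cI(\mu_r)]$ is the content of the paper's Lemma~\ref{lem:Greenbound}. The only cosmetic difference is that you estimate the smoothed kernel directly at scale $r$ via $\E[G_r(B_u,B_v)]\lesssim(|v-u|\vee r^2)^{-(d-2)/2}$, whereas the paper first treats $r=1$ and then invokes Brownian scaling together with \eqref{eq:scaling}; both computations are equivalent.
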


\par We start with Lemmas \ref{lem:ugoodcap} and \ref{lem:goodintersect}, which are preparatory lemmas. Lemma \ref{lem:ugoodcap} gives a lower bound on the probability that a Wiener sausage has a capacity larger than a fraction of its mean capacity, when it is confined to a ball of order $\sqrt{t}$. Lemma \ref{lem:goodintersect} gives a lower bound on the probability that a Wiener sausage intersects a set that is at a distance of order $\sqrt{t}$ from its starting point.

\begin{lemma}
\label{lem:ugoodcap}
Let $d\geq 4$. Abbreviate by $\cA$ the event $\{0\in A_t(c_B,r)\}$, see \eqref{eq:A}. Then,
\begin{equation}
\label{eq:ugoodcap}
\P\Big(\Big\{\cp\Big(W_{[0,t]}^{0,r}\Big)\geq \tfrac12\ \E\Big(\cp\Big(W_{[0,t]}^{0,r}\Big) ; \cA\Big)\Big\} \cap \cA\Big)\gtrsim \Phi(c_B)^4(1+o(1)),
\end{equation}
where $\Phi(c_B) = \P\big(\sup_{s\in[0,1]} ||B^{0}_s||\leq c_B\big)$ and the $o(1)$ term tends to zero as $t$ tends to infinity.
\end{lemma}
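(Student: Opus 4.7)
The plan is to run a Paley--Zygmund (second moment) argument on $Y := \cp(W_{[0,t]}^{0,r})\,\one_{\cA}$, where $\cA = \{0 \in A_t(c_B,r)\}$. Since capacity is $\bbP$-almost surely positive, $\E Y = \E[\cp(W_{[0,t]}^{0,r});\cA] > 0$, and any realisation with $Y \geq \tfrac12 \E Y$ automatically has $Y>0$, hence lies in $\cA$. Consequently
\begin{equation*}
\P\Big(\{\cp(W_{[0,t]}^{0,r}) \geq \tfrac12 \E[\cp(W_{[0,t]}^{0,r}); \cA]\} \cap \cA\Big) \;=\; \P\big(Y \geq \tfrac12 \E Y\big).
\end{equation*}

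Applying Paley--Zygmund with $\theta = 1/2$ and using $\E[Y^2] \leq \E[\cp(W_{[0,t]}^{0,r})^2]$,
\begin{equation*}
\P\big(Y \geq \tfrac12 \E Y\big) \;\geq\; \tfrac14 \frac{(\E Y)^2}{\E[Y^2]} \;\geq\; \tfrac14 \frac{(\E[\cp(W_{[0,t]}^{0,r});\cA])^2}{\E[\cp(W_{[0,t]}^{0,r})^2]}.
\end{equation*}
I then plug in Lemma~\ref{lem:5lowercap} for the numerator and Lemma~\ref{lem:uppercap2} for the denominator. Set $f(t,r) = t r^{d-4}$ if $d\geq 5$ and $f(t,r) = t/\log(t r^{-2})$ if $d=4$. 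The numerator is $\gtrsim \P(\cA)^4 f(t,r)^2$, the denominator is $\lesssim f(t,r)^2$, and the $f(t,r)^2$ factors cancel, yielding
\begin{equation*}
\P\big(Y \geq \tfrac12 \E Y\big) \;\gtrsim\; \P(\cA)^4.
\end{equation*}

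It remains to identify $\P(\cA)$ asymptotically with $\Phi(c_B)$. By Brownian scaling $(B^0_s)_{s\in[0,t]} \stackrel{d}{=} (\sqrt{t}\,\tilde B_{s/t})_{s\in[0,t]}$, the event $\cA = \{W_{[0,t]}^{0,r} \subseteq \cB(0, c_B\sqrt{t})\}$ becomes $\{\sup_{u\in[0,1]} \|\tilde B_u\| \leq c_B - r/\sqrt{t}\}$. Since the regime of interest is $r\to 0$ and $t\to\infty$, we have $r/\sqrt{t}\to 0$, and by continuity of $c\mapsto \P(\sup_{u\in[0,1]} \|\tilde B_u\| \leq c)$ at $c_B>0$,
\begin{equation*}
\P(\cA) \;=\; \Phi(c_B)\,(1+o(1)).
\end{equation*}
Raising to the fourth power gives the claimed bound.

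There is no real obstacle here beyond having the two capacity estimates in hand; the argument is a textbook Paley--Zygmund once one notices that restricting to $\cA$ comes for free in the indicator formulation, and that the ratio of second moment to squared conditional first moment collapses to $\P(\cA)^4$ because the ``mean capacity scale'' $f(t,r)$ appears with the same power in both estimates.
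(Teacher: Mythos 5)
Your argument is correct and is essentially identical to the paper's: the paper also applies the Paley--Zygmund inequality (your indicator formulation is precisely the ``slight generalization'' it alludes to), then inserts Lemma~\ref{lem:5lowercap} and Lemma~\ref{lem:uppercap2} so that the mean-capacity scale cancels, and identifies $\P(\cA)=\Phi(c_B)(1+o(1))$ by Brownian scaling exactly as you do.
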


\begin{proof}
By (a slight generalization of) the Paley-Zigmund inequality,
\begin{equation}
\label{eq:PZ}
\P\Big(\Big\{\cp\Big(W_{[0,t]}^{0,r}\Big)\geq \tfrac12\ \E\Big(\cp\Big(W_{[0,t]}^{0,r}\Big) ; \cA\Big)\Big\} \cap \cA\Big)
\geq \tfrac14 \frac{\E\Big[\cp\Big(W_{[0,t]}^{0,r}\Big);\cA\Big]^2}
{\E\Big[\cp\Big(W_{[0,t]}^{0,r}\Big)^2\Big]}.
\end{equation}
Using Lemma~\ref{lem:uppercap2} and Lemma~\ref{lem:5lowercap}, and since by invariance of Brownian motion,
\begin{equation}
\label{eq:PofA}
\P(\cA) = \P\Big(W_{[0,1]}^{0,r/\sqrt{t}}\subseteq \cB(0,c_B)\Big)=
\P\Big(\sup_{s\in[0,1]} ||B^{0}_s||\leq c_B-\frac{r}{\sqrt{t}}\Big)=\Phi(c_B)(1+o(1)),
\end{equation}
we get the claim.
\end{proof}

Given a measurable set $A\subseteq \R^d$ we write
\begin{equation}
\label{eq:rthick}
A^r= A \oplus \cB(0,r).
\end{equation}
\begin{lemma}
\label{lem:goodintersect}
There is a constant $c\in(0,\infty)$ such that the following estimate
holds uniformly for all $r\in(0,1)$ and all measurable sets $A$ such that $A\subseteq \cB(0,6c_B\sqrt{t})$,
\begin{equation}
\label{eq:intersectest}
\P\Big(W_{[0,t]}^{0,r}\cap A\neq \emptyset\Big)
\geq t^{1-d/2}\cp(A^r)\bigg(\frac{c}{c_B^{d-2}}-\frac{1}{(2\pi)^{d/2}}\bigg).
\end{equation}
\end{lemma}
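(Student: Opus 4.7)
The starting observation is that $W_{[0,t]}^{0,r}=B^0_{[0,t]}\oplus\cB(0,r)$, so the event $\{W_{[0,t]}^{0,r}\cap A\neq\emptyset\}$ coincides with $\{B^0_{[0,t]}\cap A^r\neq\emptyset\}$. The core of the plan is then the elementary bound
\[
\P\bigl(B^0_{[0,t]}\cap A^r\neq\emptyset\bigr)\geq \P\bigl(B^0_{[0,\infty)}\cap A^r\neq\emptyset\bigr)-\P\bigl(B^0_{[t,\infty)}\cap A^r\neq\emptyset\bigr),
\]
which reduces the problem to an infinite-horizon hitting probability (controlled by the capacity via \eqref{eq:hittingprob}) minus an overshoot correction. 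These two pieces will produce respectively the $c/c_B^{d-2}$ and $-(2\pi)^{-d/2}$ contributions on the right-hand side of \eqref{eq:intersectest}.

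For the positive term, I would apply \eqref{eq:hittingprob} at $x=0$ to write $\P(B^0_{[0,\infty)}\cap A^r\neq\emptyset)=\int_{A^r}G(0,z)\,e_{A^r}(\dd z)$. Since $A^r\subseteq\cB(0,6c_B\sqrt{t}+r)$, the Green function satisfies $G(0,z)\gtrsim (c_B\sqrt{t})^{-(d-2)}$ uniformly on $A^r$ in the regime of interest where $r\ll c_B\sqrt{t}$. Combined with the identity $e_{A^r}(A^r)=\cp(A^r)$ from \eqref{eq:cap.e}, this delivers a lower bound of the required form $c\, c_B^{-(d-2)}\,t^{1-d/2}\,\cp(A^r)$.

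For the overshoot term, the Markov property at time $t$ together with \eqref{eq:hittingprob} gives $\P(B^0_{[t,\infty)}\cap A^r\neq\emptyset)=\int_{A^r}\E_0[G(B_t,z)]\,e_{A^r}(\dd z)$. Writing $G(y,z)=\int_0^{\infty}p_s(y,z)\,\dd s$ and applying Chapman--Kolmogorov yields the clean identity
\[
\E_0[G(B_t,z)]=\int_t^{\infty}p_u(0,z)\,\dd u\leq (2\pi)^{-d/2}\int_t^{\infty}u^{-d/2}\,\dd u\leq (2\pi)^{-d/2}\,t^{1-d/2},
\]
using $p_u(0,z)\leq(2\pi u)^{-d/2}$ and $1/(d/2-1)\leq 1$ for $d\geq 4$. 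Integrating this uniform bound against $e_{A^r}$ produces exactly the $(2\pi)^{-d/2}\,t^{1-d/2}\,\cp(A^r)$ term to be subtracted.

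The only mildly delicate point is to make the $c_B$-dependence of the positive term sharp when $r$ is not negligible compared to $c_B\sqrt{t}$; but in the regime where the lemma is invoked in Section~\ref{S4.1} one has $r\ll c_B\sqrt{t}$, and in the pathological range where $6c_B\sqrt{t}<r$ one in fact has $0\in A^r$, so the hitting probability equals one and any bound smaller than one is automatic. No capacity or potential-theoretic input beyond the textbook identity \eqref{eq:hittingprob} is needed.
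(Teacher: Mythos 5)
Your proposal is correct and follows essentially the same route as the paper: the same decomposition into the infinite-horizon hitting probability minus the probability of hitting only after time $t$, the lower bound on the first term via \eqref{eq:hittingprob} together with $G(0,z)\gtrsim (c_B\sqrt{t})^{2-d}$ on $A^r\subseteq\cB(0,6c_B\sqrt{t}+r)$, and the bound $\E_0[G(B_t,z)]=\int_t^\infty p_u(0,z)\,\dd u\leq (2\pi)^{-d/2}t^{1-d/2}$ for the correction term. The caveat you raise about the regime $r\not\ll c_B\sqrt t$ is present (implicitly) in the paper's argument as well and is harmless in the application, where $t\to\infty$ with $c_B$ fixed.
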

\begin{proof}
Note that
\begin{equation}
\label{eq:errorterm}
\begin{aligned}
\P&\Big(W_{[0,t]}^{0,r}\cap A\neq \emptyset\Big)\\
&= \P\Big(W_{[0,\infty)}^{0,r}\cap A\neq \emptyset\Big)
-\P\bigg(\inf\Big\{s>0\colon W_{[0,s]}^{0,r}\cap A\neq \emptyset\Big\} \in(t,\infty)\bigg),
\end{aligned}
\end{equation}
so that it is enough to find a lower bound for the first term on the right hand side of \eqref{eq:errorterm} and an upper bound for the second term on the right hand side of \eqref{eq:errorterm}.
Let $e_{A^r}$ be the equilibrium measure of $A^r$. The identity in \eqref{eq:hittingprob} yields
\begin{equation}
\label{eq:firstterm}
\P\Big(W_{[0,\infty)}^{0,r}\cap A\neq \emptyset\Big)
= \P\Big(B_{[0,\infty)}^{0}\cap A^r\neq \emptyset\Big)
=\int_{A^r} G(0,y) e_{A^r}(dy).
\end{equation}
Hence, using that $G(0,y) = c\,||y||^{2-d}$ and $e_{A^r}(A^r)= \cp(A^r)$, \eqref{eq:firstterm} may be bounded from below by
\begin{equation}
\label{eq:firstbound}
c\inf_{y\in A^r}||y||^{2-d}\cp(A^r).
\end{equation}
Since $A^r\subseteq \cB(0,6c_B\sqrt{t}+r)$, we see that 
there is a constant $c>0$ such that \eqref{eq:firstbound} is at least
\begin{equation}
\label{eq:firstboundfinal}
c\ t^{1-d/2} c_B^{2-d}\cp(A^r), \qquad r\leq 1.
\end{equation}
This is the desired lower bound for the first term on the right hand side of \eqref{eq:errorterm}.
Recall that $\tilde{B}$ is a Brownian motion independent of $B^0$.
By the Markov property and \eqref{eq:hittingprob}, the second term on the right hand side
of \eqref{eq:errorterm} may be written as
\begin{equation}
\label{eq:hitaftert}
\begin{aligned}
\E&\bigg[\one\Big\{B_{[0,t]}^{0}\cap A^r=\emptyset\Big\}
\tilde{\P}_{{B_t^{0}}}\Big(\tilde{B}_{[0,\infty)}\cap A^r\neq \emptyset\Big)\bigg]\\
&= \E\bigg[\one\Big\{B_{[0,t]}^{0}\cap A^r=\emptyset\Big\}
\int_{A^r}G(B_t^0,y)\, e_{A^r}(\dd y)\bigg].
\end{aligned}
\end{equation}
Hence,
\begin{equation}
\label{eq:hitaftertupper}
\eqref{eq:hitaftert} \leq \int_{A^r}\E[G(B_t^0,y)]\, e_{A^r}(\dd y).
\end{equation}
We obtain by the Markov property applied to $B^0$ at time $t$,
\begin{equation}
\label{eq:truncG}
\E(G(B_t^{0},y))
= \int_{t}^{\infty}\P(B^{0}_s\in \dd y)\, \dd s
= \int_{t}^{\infty}\frac{1}{(2\pi s)^{d/2}} e^{-\|y \|^2/2s}\, \dd s.
\end{equation}
Using the change of variable $w=||y||^2/2s$, we see that
\begin{equation}
\label{eq:truncgamma}
\eqref{eq:truncG} = \int_0^{\|y\|^2/2t} w^{d/2-2}e^{-w}\, \dd w \times\frac{\|y\|^{2-d}}{2\pi^{d/2}},
\end{equation}
which is bounded from above by $(2\pi)^{-d/2}t^{1-d/2}$ (by bounding the exponential factor by $1$). Therefore,
\begin{equation}
\label{eq:secondtermest}
{\rm r.h.s.} \eqref{eq:hitaftertupper} \leq \cp(A^r)t^{1-d/2}(2\pi)^{-d/2}.
\end{equation}
Combining \eqref{eq:errorterm} with \eqref{eq:firstboundfinal} and \eqref{eq:secondtermest} yields the claim.
\end{proof}

\begin{proof}[Proof of Lemma \ref{lem:proba_no_connection}]
Let $z,z'\in \bbZ^2$ with $|z-z'|_1 = 1$.
Let us abbreviate $\bar\bbP(\cdot) = \bbP(\cdot\ |\ \cF_z)$ and note that we are on the event $\cE_{\good}\cap\cB_z\neq\emptyset$. Let $x\in \cE \cap \cB_{z'}$. We first give a lower bound on the probability that $x$ is good and connected to a point in $\cE_{\good}\cap\cB_z$, that is
\begin{equation}
p_0 := \inf_{y\in \cE_{\good}\cap\cB_z}\bar\bbP(x\in \cE_{\good} ,\ x\sim y).
\end{equation}
Using the Markov property on $B^x$ at time $t/2$ and that $B^x_{t/2}\in \cB(y,5c_B\sqrt{t})$ this probability can be bounded from below by
\begin{equation}
\P\Big(x\in\cE_{\good}\Big)\times
 \inftwo{x_0\in \cB(y, 5c_B \sqrt{t})}{y\in \cE_{\good}\cap\cB_z}\bar\bbP\Big(W^{x_0,r}_{[0,t/2]} \bigcap W^{y,r}_{[0,t/2]} \neq \emptyset\Big).
\end{equation}
Using Lemma \ref{lem:ugoodcap} on the first factor and Lemma \ref{lem:goodintersect} on the second factor and noticing that for all $x_0\in \cB(y,5c_B\sqrt{t})$ and $y\in \cE_{\good}\cap\cB_z$, $W^{y,r}_{[0,t/2]} \subseteq \cB(x_0,6c_B\sqrt{t})$, we get that this probability is larger than
\begin{equation}
 \phi(c_B) t^{-d/2+1}\inf_{y\in \cE_{\good}\cap\cB_z} \cp\Big(W^{y,r}_{[0,t/2]}\Big).
\end{equation}
Here, $\phi(c_B) := c\ \Phi(c_B)^4 \bigg(\frac{c}{c_B^{d-2}}-\frac{1}{(2\pi)^{d/2}}\bigg)(1+o(1))$ is positive provided $c_B$ is small enough. By definition, we know that for all $y\in \cE_{\good}\cap\cB_z$
\begin{equation}
\cp\Big(W^{y,r}_{[0,t/2]}\Big)\geq \tfrac12 \E\Big[\cp\Big(W_{[0,t/2]}^{0,r}\Big); 0\in A_{t/2}(c_B,r)\Big].
\end{equation}
Recalling \eqref{eq:param_upperbound}, \eqref{eq:PofA} and Lemma \ref{lem:5lowercap}, we obtain
\begin{equation}
p_0 \geq c\, c_*^2 \phi(c_B)\Phi(c_B)^2 t^{-d/2}(1+o(1)). 
\end{equation}
Therefore, for all $y\in \cE_{\good}\cap\cB_z$, the number of points in $\cE_{\good}\cap\cB_{z'}$ connected to $y$ is a Poisson random variable with parameter bounded from below by:
\begin{equation}
p_0 \times \Leb(\cB_{z'}) \geq c_*^2 \ \theta(c_B),\qquad \mbox{with} \quad \theta(c_B) = c\ c_B^d \phi(c_B)  \Phi(c_B)^2,
\end{equation}
which is uniform in $y$ and concludes the proof.
\end{proof}

\begin{proof}[Proof of Lemma \ref{lem:0good}]
If $x\in \cE\cap\cB_0$ then the probability that $x$ is good is larger than $c\ \Phi(c_B)^4(1+o(1))$, by Lemma \ref{lem:ugoodcap}. Therefore, the number of such points is a Poisson random variable with parameter bounded from below by $c\ c_B^d\ \Phi(c_B)^4\ t^{d/2}(1+o(1))$, which goes to $\infty$ as $t\to\infty$. This concludes the proof.
\end{proof}

\section{Capacity estimates}
\label{S5}

\subsection{Green function estimates}
\label{S5.1}

\begin{lemma}
\label{lem:Greenbound}
 Let $d\geq 4$ and $t_0>1$. For all $t \geq t_0$,
\begin{equation}
\label{eq:Greenbound}
\E\Bigg[\int_{[0,t]^2}\int_{\cB(0,1)^2}
G(B^{0}_u + z, B^{0}_v + z')\,  \dd z\, \dd z'\, \dd u\, \dd v \Bigg]
\lesssim \left\{ \begin{array}{ll}  t, &  \quad \mbox{if } d\geq 5,\\  t \log t, &  \quad \mbox{if } d= 4. \end{array} \right.
\end{equation}
\end{lemma}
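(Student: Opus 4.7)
My plan is to exchange expectation and integration via Fubini, reduce the $z,z'$-integration over $\cB(0,1)^2$ to a pointwise bound on $\int_{\cB(0,2)}\|y+w\|^{2-d}\,\dd w$ via the substitution $w=z-z'$, and then exploit the fact that the increment $B^0_u-B^0_v$ has the law of $B^0_{|u-v|}$ to reduce the double time integral to a single one. The effect of the spatial smoothing by the unit balls is easy to track: the convolution $\int_{\cB(0,1)^2}\|y+z-z'\|^{2-d}\,\dd z\,\dd z'$ is uniformly bounded for $\|y\|\le 3$ (the singularity $\|\cdot\|^{2-d}$ being locally integrable in dimension $\ge 3$), and bounded by $c\|y\|^{2-d}$ for $\|y\|\ge 3$. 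Consequently it is at most $c\,f(y):=c\min(1,\|y\|^{2-d})$.

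With that reduction in hand, write
\begin{equation*}
\int_{[0,t]^2}\E\bigl[f(B^0_u-B^0_v)\bigr]\,\dd u\,\dd v=2\int_0^t (t-s)\,\E[f(B^0_s)]\,\dd s\le 2t\int_0^t \E[f(B^0_s)]\,\dd s.
\end{equation*}
The inner expectation I would estimate by Brownian scaling: substituting $y=\sqrt{s}\,u$ in
\begin{equation*}
\E[f(B^0_s)]=\int_{\R^d}\min(1,\|y\|^{2-d})\,\frac{e^{-\|y\|^2/(2s)}}{(2\pi s)^{d/2}}\,\dd y
\end{equation*}
gives $\E[f(B^0_s)]\le \min\bigl(1,C_d\,s^{(2-d)/2}\bigr)$, where the constant $C_d=\int_{\R^d}\|u\|^{2-d}(2\pi)^{-d/2}e^{-\|u\|^2/2}\,\dd u$ is finite for $d\ge 3$ (locally integrable singularity and Gaussian tail).

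Finally I would compute $\int_0^t \min(1,s^{(2-d)/2})\,\dd s$: in dimension $d\ge 5$ the exponent $(2-d)/2\le -3/2$ makes this integral $O(1)$, yielding the bound $O(t)$ after multiplying by $t$; in dimension $d=4$ the exponent is exactly $-1$, giving $\int_1^t s^{-1}\,\dd s=\log t$, and hence $O(t\log t)$. These match \eqref{eq:Greenbound}. The only delicate point is the four-dimensional case, where the logarithmic correction is precisely the one that arises from the borderline integrability $\int^t s^{-1}\,\dd s$; all other steps are routine applications of Fubini, Brownian scaling, and elementary estimates on the Newtonian kernel. I expect no serious obstacle, but care must be taken to use $(t-s)\le t$ rather than $\int_0^t s\cdot s^{-1}\,\dd s$ (which would worsen the four-dimensional bound) and to verify that the spatial smoothing really does produce the bounded function $f$ rather than preserving the singularity at the origin.
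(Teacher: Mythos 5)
Your proposal is correct and takes essentially the same route as the paper: both arguments reduce the problem to the bound $\E[\,\cdot\,]\lesssim\min(1,|u-v|^{(2-d)/2})$ for the smoothed kernel at the increment (the near-diagonal contribution controlled by local integrability of $\|\cdot\|^{2-d}$ over $\cB(0,1)^2$, the off-diagonal contribution by the heat-kernel/scaling estimate of order $s^{1-d/2}$), and then integrate in time to get $O(t)$ for $d\ge 5$ and $O(t\log t)$ for $d=4$. The only difference is organizational: you perform the spatial convolution first and split the time integral afterwards, whereas the paper splits $[0,t]^2$ into $|u-v|\le 1$ and $|u-v|>1$ first and applies the two Green-function estimates separately.
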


\begin{proof}
\noindent {\bf Case $d\geq 5$.}\\
We start with two estimates.
First, let $0\leq u\leq 1$. We claim that
\begin{equation}
\label{eq:1stclaim}
\E(G(B_u^{0},z))\leq G(0,z)\quad \mbox{for all }z\in\R^d.
\end{equation}
Indeed, an application of the Markov property in the second equality below shows that
\begin{equation}
\label{eq:smallu}
\E(G(B_u^{0},z)) = \int_0^{\infty}\E[\tilde{\P}_{B_u^0}(\tilde{B}_s\in \dd z)]\, \dd s
= \int_0^{\infty}\P(B_{u+s}^{0}\in \dd z)\, \dd s
= \int_{u}^{\infty} \P(B_s^0\in \dd z)\, \dd s.
\end{equation}
Since the right hand side is bounded from above by $G(0,z)$ we obtain \eqref{eq:1stclaim}.
Now, let $u>1$. In this case we claim that
\begin{equation}
\label{eq:2ndclaim}
\E(G(B_u^{0},z))\leq c\,u^{1-d/2}\qquad z\in\R^d.
\end{equation}
This is a direct consequence of \eqref{eq:truncG}--\eqref{eq:truncgamma}.
To make use of the inequalities \eqref{eq:1stclaim} and \eqref{eq:2ndclaim} we write the left hand side in \eqref{eq:Greenbound} as a sum of three terms:
\begin{equation}
\label{eq:split}
\begin{aligned}
(1)&= \E\Bigg[\int_{0}^{t}\int_{0}^{(v-1)\vee 0}\int_{\cB(0,1)^2}
G(B^{0}_u + z, B^{0}_v + z')\,  \dd z\, \dd z'\, \dd u\, \dd v \Bigg],\\
(2)&= \E\Bigg[\int_{0}^{t}\int_{(v+1)\wedge t}^{t}\int_{\cB(0,1)^2}
G(B^{0}_u + z, B^{0}_v + z')\,  \dd z\, \dd z'\, \dd u\, \dd v \Bigg],\\
(3)&= \E\Bigg[\int_{0}^{t}\int_{(v-1)\vee 0}^{(v+1)\wedge t}\int_{\cB(0,1)^2}
G(B^{0}_u + z, B^{0}_v + z')\,  \dd z\, \dd z'\, \dd u\, \dd v \Bigg].
\end{aligned}
\end{equation} 
We first estimate the third term.
Note that for all $x,y\in\R^d$ the relation $G(x,y)=G(0,y-x)$ holds. Hence, a change in the order of integration together with equation \eqref{eq:1stclaim} and the fact that $B_v^0-B_u^0$ has the same distribution as $B_{|v-u|}^0$ show that
\begin{equation}
\label{eq:polarterm}
(3)\leq \int_{\cB(0,1)^2}\int_{0}^{t}\int_{v-1}^{v+1} G(0,z-z')\, \dd u\, \dd v\,\dd z\, \dd z'\leq 2t\int_{\cB(0,1)^2}G(0,z-z')\, \dd z\, \dd z'.
\end{equation}
Hence, it suffices to show that the integral on the right-hand side of \eqref{eq:polarterm} converges.
By \eqref{eq:green}, the right hand
of \eqref{eq:polarterm} is at most
\begin{equation}
\label{eq:geometric}
2ct\int_{\cB(0,1)}\int_{\cB(z',1)} \|z-z'\|^{2-d} \dd z\, \dd z' 
= 2ct\int_{\cB(0,1)^2}\|z\|^{2-d}\, \dd z\, \dd z',
\end{equation}
where we made the substitution $\zeta=z-z'$ to obtain the last equality.
Since the integral on the right-hand side of \eqref{eq:geometric} is finite, $(3)\leq ct$.
It remains to show that the first and second terms in \eqref{eq:split} give the correct contribution.
Equation \eqref{eq:2ndclaim} yields
\begin{equation}
\label{eq:estof1}
(1)\leq c\ \int_{\cB(0,1)^2}\int_{0}^{t}\int_{0}^{(v-1)\vee 0}
|v-u|^{1-d/2}\, \dd u\, \dd v\, \dd z\, \dd z'.
\end{equation}
A simple computation now shows that there is indeed a constant $c>0$ such that for all
$t\geq 0$ the bound $(1)\leq ct$ holds. The argument for (2) in \eqref{eq:split} is similar and will therefore be omitted.
This finishes the proof in this case.\\
\noindent {\bf Case $d=4$.}
The proof works almost verbatim as in the previous case. The only difference is that \eqref{eq:estof1} becomes
\begin{equation}
\label{eq:4estof1}
\int_{\cB(0,1)^2}\int_0^{t}\int_0^{(v-1)\vee 0} |v-u|^{-1}\, \dd u\, \dd v\, \dd z\, \dd z',
\end{equation}
which is upper bounded by $ct\log t$.
We omit the details.
\end{proof}

\subsection{Lower bounds. Proof of Lemma \ref{lem:5lowercap}.}
\label{S5.2}

The proof of Lemma~\ref{lem:5lowercap} makes use of the variational representation in \eqref{eq:cap}, according to which it suffices to construct a measure which is close to the "true" minimizer in \eqref{eq:cap}. It will turn out that it is enough to choose a measure of the local time of the Brownian motion in a neighborhood of a given set. In this way the Green function estimates of Lemma \ref{lem:Greenbound} enter naturally into the picture. 

\begin{proof}[Proof of Lemma \ref{lem:5lowercap}]
We start with the case $d\geq 5$.\\
\textbf{1st Step:} Let $r=1$ and $\nu$ be the probability measure supported on $W_{[0,t]}^{0,1}$ and defined by
\begin{equation}
\label{eq:defnu}
\nu(A) = \frac{1}{c_{\mathrm{vol}}t} \int_{0}^{t}\int_{\cB(0,1)}
\one\{B_s^{0} + z\in A\}\, \dd s\, \dd z,\quad A\mbox{ Borel-measurable}.
\end{equation}
Note that by the variational formula in \eqref{eq:cap},
\begin{equation}
\label{eq:caplower}
\E\Big[\cp\Big( W_{[0,t]}^{0,1}\Big);0\in A_t(c_B,1)\Big]
\geq \E[\cI(\nu)^{-1}; 0\in A_t(c_B,1)],
\end{equation}
where
\begin{equation}
\label{eq:energynu}
\cI (\nu) = \frac{1}{c_{\mathrm{vol}}^2 t^2}\int_{[0,t]^2}\int_{\cB(0,1)^2} 
G(B_u^{0}+z, B_v^{0} + z')\, \dd z\, \dd z' \dd u\, \dd v.
\end{equation}
By the Cauchy-Schwarz inequality,
\begin{equation}
\label{eq:Jensenbound}
\E[\cI(\nu)^{-1};0\in A_t(c_B,1)]\geq \E[\cI(\nu)]^{-1}\P(0\in A_t(c_B,1))^2.
\end{equation}
Finally, by Equation~\eqref{eq:Greenbound}, the right hand side of \eqref{eq:Jensenbound} is bounded from below by $ct\P(A_t(c_B,1))^2$. This yields the claim in the case $r=1$.\\
\noindent
\textbf{2nd Step:} Let now $r>0$ be chosen arbitrarily.
By Brownian scaling and the capacity scaling relation \eqref{eq:scaling},
\begin{equation}
\label{eq:generalr}
\begin{aligned}
\E\Big[\cp\Big(W_{[0,t]}^{0,r}\Big);0\in A_t(c_B,r)\Big]
&= \E\Big[\cp \Big(\frac{r}{r}W_{[0,t]}^{0,r}\Big);0\in A_t(c_B,r)\Big]\\
&= r^{d-2} \E\Big[\cp \Big(W_{[0,t/r^2]}^{0,1}\Big);0\in A_{tr^{-2}}(c_Br^{-1},1)\Big].
\end{aligned}
\end{equation} 
Using the result for the case $r=1$ and noting that $\P(0\in A_{tr^{-2}}(c_Br^{-1},1))=\P(0\in A_t(c_B,r))$ finishes the proof for $d\geq 5$.

The proof in the case $d=4$ works along similar lines, the only difference being that the application of Lemma~\ref{lem:Greenbound} is adapted.
\end{proof}

\subsection{Second moment estimates. Proof of Lemma \ref{lem:uppercap2}}

\label{S5.3}
\subsubsection{Case $d\geq 5$}
\label{S5.3.1}
\begin{proof}
\textbf{1st Step:} In this step we prove Lemma~\ref{lem:uppercap2} under the assumption 
$r=1$. 
First note that by Equation \eqref{eq:unionbound}
\begin{equation}
\label{eq:capseparation}
\cp\Big(W_{[0,t]}^{0,1}\Big) \leq 
\cp\Bigg(\bigcup_{i=1}^{\lceil t \rceil} W_{[(i-1), i]}^{0,1}\Bigg)  
\leq \sum_{i=1}^{\lceil t \rceil} \cp\Big( W_{[(i-1), i]}^{0,1}\Big),
\end{equation}
so that by the independence of $B_{i}^{0} - B_{i-1}^{0}$ and $B_{j}^{0} - B_{j-1}^{0}$ for all $i \neq j$ in $\{1,2,\ldots,\lceil t\rceil\}$,
\begin{equation}
\label{eq:squaresum}
\begin{aligned}
&\E\Big[\cp\Big(W_{[0,t]}^{0,1}\Big) ^2\Big]\\
&\leq \sum_{\substack{i,j=1\\i\neq j}}^{\lceil t\rceil}
\E\Big[\cp\Big(W_{[(i-1), i]}^{0,1}\Big)\Big]
\times\E\Big[\cp\Big(W_{[(j-1), j]}^{0,1}\Big)\Big]
+\sum_{i=1}^{\lceil t\rceil} \E\Big[\cp\Big(W_{[(i-1), i]}^{0,1}\Big)^2\Big].
\end{aligned}
\end{equation}
Consequently, by the stationarity in time of Brownian motion and by the Cauchy-Schwarz inequality, the right hand side of \eqref{eq:squaresum} is bounded from above by
\begin{equation}
\label{eq:squarebound}
\lceil t\rceil ^2 \times \E\Big[\cp\Big(W_{[0,1]}^{0,1}\Big)^2\Big].
\end{equation}
To see that the expectation on the right hand side of \eqref{eq:squarebound} is finite,
note that by the scaling relation \eqref{eq:scaling}, $\cp(\cB(0,R))= R^{d-2}\cp(\cB(0,1))$ for any $R>0$. Since $\E(\sup_{s\in (0,1)} \|B_s\|^{d-2})<\infty$, the desired finiteness readily follows. This proves Lemma~\ref{lem:uppercap2} in the case $r=1$.\\
\noindent
\textbf{2nd Step:} We now treat the general case.
To that end, note that by Brownian scaling and by the scaling relation \eqref{eq:scaling},
\begin{equation}
\label{eq:capscaling}
\cp\Big(W_{[0,t]}^{0,r}\Big)= \cp\Big(\frac{r}{r} W_{[0,t]}^{0,r}\Big)
\stackrel{\mbox{\small (law)}}{=} r^{d-2}\cp\Big(W_{[0,tr^{-2}]}^{0,1}\Big).
\end{equation}
The claim follows from equation \eqref{eq:capscaling} in combination with the first step.
\end{proof}

\subsubsection{Case $d=4$}
\label{S5.3.2}
The proof is based on methods presented in \cite[Chapter 10]{LL10}.
Fix $t>0$, let $B$ be the Brownian  motion driving $W_{[0,t]}^{0,1}$.

\begin{proof} 
We give the proof for the case $r=1$. A scaling argument as in \eqref{eq:capscaling} yields the general case.
First, note that
\be \label{eq:ineg.cp.Zt}
\cp\Big(\ws{0,1}{[0,t]}\Big) \lesssim \frac{t}{Z_t}, \qquad \mbox{where} \qquad Z_t=\inf_{y\in W_{[0,t]}^{0,1}}\int_0^t\, \int_{\cB(0,1)}\, \, G(y,B_u+z)\ \dd z\ \dd u.
\ee
Let us define $f(y) = \int_0^t \one\{y\in \cB(B_u,1)\}\dd u$ for $y\in\bbR^d$, and notice that $f(y) >0$ if and only if $y\in \ws{0,1}{[0,t]}$. Henceforth, we abbreviate $W=\ws{0,1}{[0,t]}$. By \eqref{eq:hittingprob}, we have one the one hand
\be
\int_W \int_W G(x,y) f(y) e_W(\dd x) \dd y = \int_W f(y) \dd y = c_\vol \ t,
\ee
and on the other hand,
\be
\int_W \int_W G(x,y) f(y) e_W(\dd x) \dd y \geq \int_W Z_t e_W(\dd x) = Z_t \cp(W),
\ee
from which we get \eqref{eq:ineg.cp.Zt}.
For a constant $c_0>0$ to be determined later,
\begin{equation}
\label{eq:capdiv}
\begin{aligned}
\E\bigg[\cp\Big(W_{[0,t]}^{0,1}\Big)^2\bigg]&=
\E\bigg[\one\{Z_t\leq c_0\log t\}\,\cp\Big(W_{[0,t]}^{0,1}\Big)^2\bigg]
+ \E\bigg[\one\{Z_t >c_0 \log t\}\,\cp\Big(W_{[0,t]}^{0,1}\Big)^2\bigg]\\
&\lesssim \E\bigg[\one\{Z_t\leq c_0\log t\}\,\cp\Big(W_{[0,t]}^{0,1}\Big)^2\bigg] + \Big( \frac{t}{c_0\log t} \Big)^2,\quad {\rm by} \eqref{eq:ineg.cp.Zt}.
\end{aligned}
\end{equation}
Note that by an application of the Cauchy-Schwarz inequality,
\begin{equation}
\label{eq:1CS}
\E\bigg[\one\{Z_t \leq c_0 \log t\}\,\cp\Big(W_{[0,t]}^{0,1}\Big)^2\bigg] \leq \P(Z_t\leq c_0\log t)^{1/2}\E\Big[\cp\Big(W_{[0,t]}^{0,1}\Big)^4\Big]^{1/2}.
\end{equation}
To estimate the right hand side in \eqref{eq:1CS} we use the a priori estimate
\begin{equation}
\label{eq:priori}
\E\Big[\cp\Big(W_{[0,t]}^{0,1}\Big)^4\Big]\leq c\ t^4,
\end{equation}
which may be proven as the corresponding second moment estimate in \eqref{eq:squaresum}
or via a scaling argument using Brownian scaling and the capacity scaling relation~\eqref{eq:scaling}.
Using Lemma \ref{lem:Z} below to handle the probability appearing on the right hand side of \eqref{eq:1CS} and choosing $c_0$ small enough such that $4-c/c_0 \leq 2$, we may conclude the proof.
\end{proof}

\begin{lemma}
\label{lem:Z}
There is $t_0>0$ such that for all $\gep$ small enough,
\begin{equation}
\label{eq:Zest}
\P(Z_t \leq \gep\log t)\leq t^{-c/\gep}, \qquad t\geq t_0.
\end{equation}
\end{lemma}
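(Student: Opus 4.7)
The plan is to combine a covering argument with a uniform per-point lower tail for the Green integral. First I would restrict attention to trajectories with $W_{[0,t]}^{0,1}\subseteq\cB(0,t)$ — by Doob's maximal inequality this holds outside an event of probability $\leq e^{-ct}$, negligible for the target bound. Cover $\cB(0,t)$ by a grid $\Lambda_t$ of mesh $t^{-M}$ for $M$ large, so that $|\Lambda_t|\leq t^C$. The map $y\mapsto\Phi(y):=\int_0^t\int_{\cB(0,1)}G(y,B_u+z)\,\dd z\,\dd u$ is Lipschitz in $y$ with constant polynomial in $t$ (the singularity of $G$ is integrated out by the $\int_{\cB(0,1)}$ smoothing), so $\{Z_t\leq \epsilon\log t\}$ is contained, up to a negligible correction, in the union over $y\in\Lambda_t\cap W_{[0,t]}^{0,2}$ of $\{\Phi(y)\leq 2\epsilon\log t\}$. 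A union bound reduces the task to proving, for each single $y$,
\begin{equation*}
\bbP\big(y\in W_{[0,t]}^{0,2},\ \Phi(y)\leq 2\epsilon\log t\big)\leq t^{-c/\epsilon}.
\end{equation*}

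\textbf{Reduction via Markov and time reversal.} Let $\tau=\inf\{u\geq 0:B_u\in\overline{\cB}(y,2)\}$ and $\tau'=\sup\{u\leq t:B_u\in\overline{\cB}(y,2)\}$. On $\{y\in W_{[0,t]}^{0,2}\}$, one of $\tau\leq t/2$ or $\tau'\geq t/2$ must hold. In the first case, the strong Markov property at $\tau$ shows that the post-$\tau$ part of $\Phi(y)$ is distributed as an integral over a fresh Brownian motion of duration $\geq t/2$ starting within distance $2$ of $y$. In the second case, applying time reversal of Brownian motion and the strong Markov property at $t-\tau'$ yields an analogous statement. Thus matters reduce to showing, uniformly in $x$ with $||x-y||\leq 2$ and $s\geq t/2$,
\begin{equation*}
\P_x\!\Big(\int_0^s G(y,\tilde B_u)\,\dd u\leq \epsilon'\log s\Big)\leq s^{-c/\epsilon'},\qquad \epsilon'\asymp\epsilon.
\end{equation*}

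\textbf{Dyadic decomposition and per-scale estimate.} For the above tail estimate I would use a dyadic decomposition. Set $T_k=\inf\{u\geq 0:||\tilde B_u-y||\geq 2^k\}$ and $K=\lfloor\alpha\log_2 s\rfloor$ for some $\alpha\in(0,1/2)$. On the event $\{T_K\leq s\}$ (whose complement has probability $\leq e^{-cs^{1-2\alpha}}$ by Doob, hence negligible), since $G(y,z)\gtrsim 2^{-2k}$ on $\{||z-y||\leq 2^k\}$,
\begin{equation*}
\int_0^s G(y,\tilde B_u)\,\dd u\;\geq\;c\sum_{k=2}^K 2^{-2k}(T_k-T_{k-1})\;=:\;c\sum_{k=2}^K X_k.
\end{equation*}
By Brownian scaling, conditionally on the strong Markov $\sigma$-field at $T_{k-1}$, $X_k$ is distributed as the exit time from the annulus $\{1/2\leq||\,\cdot\,||\leq 1\}$ by a four-dimensional Brownian motion started on the inner sphere, so in particular $\E[X_k]\asymp 1$. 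The central per-scale lower tail estimate is
\begin{equation*}
\sup_{x_0}\P_{x_0}(X_k\leq \epsilon')\leq Ce^{-c/\epsilon'},
\end{equation*}
which follows from Doob's maximal inequality: crossing the annulus in time $\epsilon'$ requires a Brownian displacement of at least $1/2$ within that window.

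\textbf{Chaining and main obstacle.} To combine the per-scale bounds, observe that $\sum_{k=2}^K X_k\leq \epsilon'K/2$ forces at least $K/2$ indices to satisfy $X_k\leq \epsilon'$, hence
\begin{equation*}
\P\!\Big(\sum_{k=2}^K X_k\leq \epsilon'K/2\Big)\leq \binom{K}{K/2}(Ce^{-c/\epsilon'})^{K/2}\leq e^{-c''K/\epsilon'},
\end{equation*}
valid for $\epsilon'$ small enough. Since $K\asymp \log s$, this yields $s^{-\bar c/\epsilon'}$, and with $s\asymp t$ and a union bound over the $t^C$ grid points one obtains $t^{C-\bar c/\epsilon}\leq t^{-c/\epsilon}$ once $\epsilon$ is small enough. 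The main technical hurdle will be the chaining step: the $X_k$'s are not independent, since their joint law depends on successive hitting positions on the concentric spheres, so the per-scale bound must be stated as a conditional estimate uniform in the prior trajectory. Establishing this uniformity rests on rotational symmetry and Brownian scaling, and is where most of the care will be needed.
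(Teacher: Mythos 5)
Your proposal is correct and follows essentially the same route as the paper: the core of both arguments is the dyadic decomposition into annulus-crossing times $\tau_k-\tau_{k-1}$, the per-scale tail $\P(\tau_k-\tau_{k-1}\leq \gep\, 2^{2k})\leq e^{-c/\gep}$ via Brownian scaling and Doob, and the observation that smallness of the total Green integral forces at least half of the $N\asymp\log t$ crossings to be fast, which (after conditioning successively via the strong Markov property, exactly as you indicate) yields $e^{-cN/\gep}=t^{-c/\gep}$. The only real difference is in the preliminary reduction of the infimum over $y\in W_{[0,t]}^{0,1}$: you use a spatial grid, Lipschitz continuity of the mollified Green integral and time reversal, whereas the paper discretizes in time and uses the two-sided comparison $G^*(x)\asymp G^*(y)$ for $||x-y||\leq 1$ — both are polynomial union bounds absorbed by taking $\gep$ small.
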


\begin{proof}[Proof of Lemma \ref{lem:Z}]

Let \be\label{eq:Gstar}
G^*(x,y) = \int_{z\in\cB(0,1)} G(x,y+z)\dd z.
\ee
and $G^*(x) = G^*(x,0)$.
We claim that there are $t_0>0$ and $c_0>0$ such that for all $\varepsilon$ small enough, for all $t\geq t_0$,
\be
\label{eq:claim}
\P\Big(\int_0^t G^*(B_u)\,\dd u\leq \varepsilon\log 2t\Big)\lesssim t^{-c_0/\varepsilon}.
\ee
We first show how one deduces Lemma~\ref{lem:Z} from this claim. To that end, we choose $\varepsilon$ small enough such that~\eqref{eq:claim} holds. Recall \eqref{eq:ineg.cp.Zt} and note that
\begin{equation}
Z_t = \inf_{y\in \ws{0,1}{[0,t]}} \int_0^t G^*(y,B_u) \dd u,
\end{equation}
and that there exists $C^*$ such that 
\be \label{eq:ratioGGstar}
\frac{1}{C^*} \leq \frac{G^*(x)}{G^*(y)} \leq C^*,\qquad ||x-y||\leq 1.
\ee
To show the existence of such a $C^*$ we use that $G^*(0)<\infty$ and that $G^*(x)\leq G^*(0)$, where both properties follow from the finiteness and monotonicity of $G$. 
We deduce that with $C^*= G^*(0)/\inf\{G^*(y):\, ||y||\leq 3\}$ we have the inequality
$G^*(x)\leq C^*G^*(y)$ for all $||x-y||\leq 1$ such that $\min\{||x||,||y||\}\leq 2$.
If $\min\{||x||,||y||\}>2$, then we note that for all $z\in\R^4$ with $||z||\leq 1$,
\begin{equation}
||x-z||\leq ||x-y||+ ||y-z||\leq 2||y+z||.
\end{equation}
Hence, after a possible increase of $C^*$ we may conclude the proof of~\eqref{eq:ratioGGstar}. Then,
\be
Z_t \geq \frac{Z'_t}{C^*},\quad \mbox{ with } \quad  Z'_t = \inf_{y\in B_{[0,t]}} \int_0^t G^*(y,B_u) \dd u.
\ee
To proceed, let $n\in\N$ such that
\be\label{eq:epsilon}
\varepsilon\in \Big(\frac{c_0\log t}{C^*nt}, \frac{c_0\log t}{C^*(n-1)t}\Big],\quad\mbox{ with } \frac{1}{0}=\infty.
\ee
Let $s>0$ and define
\be
Z'_{s,t,i} = \inf_{(i-1)/(2ns) \leq v \leq i/(2ns)} \int_0^t G^*(B_v,B_u) \dd u, \qquad 1\leq i \leq \lceil 2nst \rceil.
\ee
Then, for $i\leq \lceil 2nt^2 \rceil /2$,
\be\ba
\P(Z'_{t,t,i} \leq \gep \log t) &\leq \P\Big(\inf_{(i-1)/(2nt) \leq v \leq i/(2nt)} \int_{(i-1)/(2nt)}^{(i-1)/(2nt)+t/2} G^*(B_v,B_u) \dd u \leq \gep \log t\Big)\\
&= \P(Z'_{t,t/2,1} \leq \gep \log t),
\ea\ee
and with a similar argument for $i>\lceil 2nt^2 \rceil /2$,
\be
\P(Z'_t \leq \gep \log t) \leq \lceil 2nt^2 \rceil \P(Z'_{t,t/2,1} \leq \gep \log t).
\ee
Note that $\P(\sup_{0\leq v \leq 1/(2nt)} ||B_v|| >1) = \P(\sup_{0\leq v \leq 1} ||B_v|| > \sqrt{2nt}) \leq e^{-cnt}$, by Brownian scaling and Doob's inequality. Thus, using \eqref{eq:ratioGGstar},
\be \ba
\P(Z'_{t,t/2,1} \leq \gep \log t) &\leq e^{-nt} + \P\Big(Z'_{t,t/2,1} \leq \gep \log t, \sup_{0\leq s \leq 1/(2nt)} ||B_s|| \leq 1\Big)\\
&\leq e^{-nt} + \P\Big( \int_0^{t/2} G^*(B_u) \dd u \leq C^*  \gep \log t\Big)\\
&\lesssim t^{-c_0/(C^*\gep)},
\ea \ee
where the last estimate makes use of ~\eqref{eq:claim} and the relation~\eqref{eq:epsilon}.
Using~\eqref{eq:epsilon}, one may conclude the proof.
We are left with showing \eqref{eq:claim}.

Since $G^*$ is radial, harmonic outside $\cB(0,1)$, continuous on $\bar\cB(0,1)$ and $\lim_{||x||\nearrow\infty} G^*(x) =0$, 
\be \label{eq:decreaseGstar}
G^*(x)  = c ||x||^{-2},\qquad ||x||\geq 1,
\ee
see for instance Exercise 3.7 in \cite{MP10}.
Define the sequence of stopping times $\tau_0=0$ and $\tau_i = \inf\{ s\geq \tau_{i-1}\colon ||B_s|| \geq 2^i\}$, for $i\in\N$. From \eqref{eq:decreaseGstar}, we know that
\be
G^*(B_u) \geq \min\Big(c2^{-2i}, \inf_{x\in\cB(0,1)} G^*(x)\Big), \qquad \tau_{i-1} \leq u \leq \tau_i.
\ee
Moreover, combining~\eqref{eq:Gstar} and \eqref{eq:green} we see that
\be
G^*(B_u) \geq c2^{-2i}, \qquad \tau_{i-1} \leq u \leq \tau_i,\qquad i\in\N.
\ee
We obtain that
\be\label{eq:Gstarlowerbound}
\int_0^{\tau_N} G^*(B_u)\dd u \geq c\sum_{i=1}^N 2^{-2i} (\tau_i - \tau_{i-1}).
\ee
We now set $I_k = \one\{\tau_k - \tau_{k-1} < \gep 2^{2k}\}$, where $\gep\in(0,1)$. Using the strong Markov property and Brownian scaling we see that,
\be
\P(I_k=1) \leq \P\Big(\sup_{0\leq s \leq 4\gep} ||B_s|| > 1/2\Big) \leq e^{-c/\gep}, \qquad k\in\N.
\ee
Thus, the strong Markov property yields that the random variable $\sum_{1\leq k \leq N} I_k$ is stochastically dominated by a binomial random variable with parameters $N$ and $e^{-c/\gep}$. Therefore, well known tail estimates for the Binomial distribution show that 
\be \label{eq:inegIk}
\P\Big(\sum_{1\leq k \leq N} I_k \geq N/2\Big) \lesssim e^{-cN/\gep},
\ee
where $c$ and the proportionality constant are independent of $\varepsilon$.
Moreover, if $\sum_{1\leq k \leq N} I_k < N/2$, then as a consequence of~\eqref{eq:Gstarlowerbound}, $\int_0^{\tau_N} G^*(B_u) \geq \gep N/2$.
From this observation and \eqref{eq:inegIk}, we deduce that
\be \label{eq:ubintgstar}
\P\Big(\int_0^{\tau_N} G^*(B_u) \leq \gep N/2\Big) \lesssim e^{-cN/\gep}. 
\ee
It remains to replace $\tau_N$ in 
~\eqref{eq:ubintgstar} by $t$. For that we distinguish two cases.\\
\noindent {\bf (1)} $\varepsilon > (\log 2t)/t$. In this case we may write
\be
\P\Big(\int_0^t G^*(B_u) \dd u \leq \gep \log 2t\Big) \leq \P\Big(\int_0^{\tau_N} G^*(B_u) \dd u \leq \gep \log 2t\Big) + \P(\tau_N\geq t),
\ee
with $N = \lceil \frac14 \frac{\log 2t}{\log 2}\rceil$. Indeed, for the first term we use \eqref{eq:ubintgstar} with $\gep$ replaced by $(8\log2)\gep$ and for the second term, we have
\be
\P(\tau_N\geq t) 
 \leq e^{-c t^{1/2}},
\ee
for $t$ large enough, thanks to a standard small ball estimate, see \cite{LS01}. \\
\noindent {\bf (2)} $\varepsilon\leq (\log 2t)/t$. Note that if 
\begin{equation}
\label{eq:Gstarextreme}
\int_0^tG^*(B_u)\, \dd u\leq (\log t)^2/t,
\end{equation}
then there is $\bar u\in [0,t]$ such that $G^*(B_{\bar u})\leq (\log 2t)^2/t^2$. Thus, by the definition of $G^*$ and \eqref{eq:decreaseGstar} we see that $||B_{\bar u}||\geq t/\log 2t$.
Let $N = \lceil \frac14 \frac{\log 2t}{\log 2}\rceil$. We conclude that in particular the intersection of the event in~\eqref{eq:Gstarextreme} and $\{\tau_N\geq t\}$ is empty. We may now conclude in a similar fashion as in {\bf (1)}.

\end{proof}

\subsection{Large deviations estimates. Proofs of Lemmas \ref{lem:ldestimate} and \ref{ldestimate4d}}
\label{S5.4}
\begin{proof}[Proof of Lemma \ref{lem:ldestimate}]
We prove Lemma~\ref{lem:ldestimate} just for the case $r=1$. The extension to general $r$ can be done via a scaling argument as in Section~\ref{S5.3}. For simplicity, we assume that $t\in\bbN$, the extension to $t\in (0,\infty)\setminus\bbN$ being straightforward. Assume for the moment that
\be
\label{exp.mom.cap}
\E\big[ \exp\big(\gl \ \cp(\ws{0,1}{[0,1]}) \big)\big] < \infty, \qquad \gl>0.
\ee
Then, using~\eqref{eq:capseparation}, for $\gl >0$
\be
\ba
\P\Big(\cp\Big(\ws{0,1}{[0,t]}\Big) \geq j t\Big) &\leq \P\Big(\sum_{1\le i \le t} \cp\Big(\ws{0,1}{(i-1,i]}\Big) \geq j t\Big)\\
&\leq e^{-\gl j t} \E\bigg[ \exp\Big( \gl \sum_{1\le i \le t} \cp(\ws{0,1}{(i-1,i]})  \Big) \bigg]\\
&\leq \exp\Big\{ -\gl j t + t \log \E\Big(e^{\gl \cp(\ws{0,1}{[0,1]})}\Big) \Big\},
\ea
\ee
by the Markov property. By setting
\be
\gl > 2 \qquad \mbox{and} \qquad j_0 = \frac{1}{\gl - 2} \log\E\Big[ e^{\gl \cp(\ws{0,1}{[0,1]})} \Big],
\ee
we obtain the desired result. We now prove \eqref{exp.mom.cap}. The proof is inspired from Sznitman~\cite[Section 5]{S87}. Define the sequence of stopping times
\be 
T_0 = 0, \qquad T_{n+1} = \inf\{s\geq T_n \colon |B_{s+T_n} - B_{T_n}| \geq 1\},\qquad n\in\bbN_0.
\ee
If $N = \sup\{n\in\N_0\colon T_n \leq 1\}$, then $\ws{0,1}{[0,1]} \subseteq \bigcup_{k=1}^{N} \cB(B_{T_k}, 2)$, so $\cp(\ws{0,1}{[0,1]}) \leq cN$, by \eqref{eq:unionbound}. Therefore,
\be \label{eq:lemexp1}
\E\big[ \exp\big(\gl \ \cp(\ws{0,1}{[0,1]}) \big)\big] \leq \E\big[ \exp(c\gl N)\big].
\ee
By the Markov property, the increments $(T_i - T_{i-1})_{i\in\N}$ are iid. Thus, we may write
\be \label{eq:lemexp2}
\P(N\geq n) \leq \P(T_n \leq 1) = \P\Big(\sum_{i=1}^n (T_i - T_{i-1}) \leq 1\Big)\leq e^\gamma \E(e^{-\gamma T_1})^n,\quad \gamma>0.
\ee
From Sznitman~\cite[Lemma 5.1]{S87}, we may choose $\gamma$ large enough  such that $\E(e^{-\gamma T_1}) < e^{-c\gl}$, which, in combination with \eqref{eq:lemexp1} and \eqref{eq:lemexp2}, concludes the proof. 
\end{proof}

\begin{proof}[Proof of Lemma \ref{ldestimate4d}]
Recall \eqref{eq:ineg.cp.Zt}. Thus, there is $c_0>0$ such that 
\be
\P\Big( \cp(\ws{0,1}{[0,t]}) \geq j \frac{t}{\log t}\Big) \leq \P\Big(Z_t \leq \frac{c_0}{j} \log t\Big).
\ee
By Lemma \ref{lem:Z}, there exists $j_0\in(0,\infty)$ such that for $j\geq j_0$,
\be
\P\Big(Z_t \leq \frac{c_0}{j} \log t\Big) \leq t^{-cj},
\ee
from which we get
\be
\P\Big( \cp(\ws{0,1}{[0,t]}) \geq j \frac{t}{\log t}\Big) \leq t^{-cj}.
\ee
 We now generalise the estimate to arbitrary radius $r>0$. Since
\be
\cp(\ws{0,r}{[0,t]}) = r^2 \cp (\frac{1}{r} \ws{0,r}{[0,t]}) \stackrel{\mbox{(law)}}{=} r^2 \cp \ws{0,1}{[0,tr^{-2}]},
\ee
which is of the order
\be
r^2 \times \frac{tr^{-2}}{\log(tr^{-2})} = \frac{t}{\log t + 2 |\log r|} \sim \frac{t}{|\log r|}, \quad \mbox{since } t=\gep\sqrt{|\log r|}.
\ee
Therefore,
\be
\label{eq:capld4}
\P\Big(\cp(\ws{0,r}{[0,t]})\geq j\frac{t}{|\log r|}\Big) \leq t^{-cj}, \qquad j>j_0,
\ee
which holds after a possible increase of $j_0$.
\end{proof}

\end{document}